\numberwithin{equation}{section}
\newtheorem{theorem}{T{\scriptsize HEOREM}}[section]
\newtheorem{lemma}[theorem]{L{\scriptsize  EMMA}}
\newtheorem{corollary}[theorem]{C{\scriptsize OROLLARY}}
\newtheorem{remark}{R{\scriptsize  EMARK}}[section]
\newtheorem{definition}{D{\scriptsize  EFINITION}}[section]
\newtheorem{example}{E{\scriptsize  XAMPLE}}[section]
\newcommand{\Drk}{{\rm Rank}}
\newcommand{\Ark}{{\rm ARank}}
\newcommand{\rk}{{\rm rank}}
\newcommand{\ind}{{\rm ind}}
\newcommand{\Dind}{{\rm Ind}}
\newcommand{\Aind}{{\rm AInd}}
\begin{document} 

\begin{frontmatter}
\title{Weak Dual Drazin Inverse and its Characterizations and Properties}

\author[1]{Hongxing Wang\corref{correspondingauthor}}
\ead{winghongxing0902@163.com}
\author[1]{Qiuli Ling}
\ead{l13211322247@163.com}
\author[2]{Tianhe Jiang}
\ead{Yeoljiang@163.com}
\author[3]{Shuangzhe Liu}
\ead{shuangzhe.liu@canberra.edu.au}
\cortext[correspondingauthor]{Corresponding author}
 \address[1]{School of Mathematical Sciences,
	Guangxi Key Laboratory of Hybrid Computation and IC Design Analysis,
	Guangxi Minzu University,
	Nanning, 530006,  China}

 \address[2]{Office of Computer and Mathematics, 
Department of Public Infrastructure, 
Guangxi Health Science College,
Nanning 530023, China}

\address[3]{Faculty of Science and Technology,
University of Canberra,  Canberra, ACT 2617, Australia} 

\begin{abstract}
The dual Drazin inverse is an important dual generalized inverse. In this paper, to extend it we introduce the weak dual Drazin inverse which is unique and exists for any square dual matrix. When  the dual Drazin inverse exists, it  coincides with  the weak dual Drazin inverse.  In addition, we   introduce the weak dual group inverse  and apply it to studying one type restricted dual matrix equation. 
\end{abstract}

\begin{keyword}
Dual Drazin inverse;
Dual group inverse;
Weak dual Drazin inverse;
Weak dual group inverse
\MSC[2010]  15A66, 15A09, 15A24
\end{keyword}
\end{frontmatter}

\linenumbers

\section{Introduction}\label{sec1}

As a powerful and convenient mathematical tool, the dual numbers and their algebras
  are  widely used in mechanical engineering fields,
 such as kinematic analysis \cite{yundongxuefenxi--1, yundongxuefenxi--2, Wei2024siammaa-A,Wei2024coam},
 robotics \cite{jiqire--1,jiqire--2,Chen2024jota-A,Qi2023camc}.
 In addition,
 they  are also found  in solving linear equations
 in screw motion \cite{luoxuanyundong} and rigid body motion \cite{gangtiyundong}.

In this paper,  we consider
 to form a dual number $\widehat{a}=a+\varepsilon a_{0}$
combining the standard part $a$ and the dual part $\varepsilon a_{0}$,
 where $a$ and $a_{0}$ are both real numbers. It  is noted that
 the dual unit $\varepsilon$ meets the requirements
  $\varepsilon^{2}=0$,
$0\varepsilon=\varepsilon0=0$,
$1\varepsilon=\varepsilon1=\varepsilon$, but $\varepsilon\neq 0$.
 Let $\mathbb{D}$ be the set of dual numbers,
and $\mathbb{D}^{m\times n}$
 be the set of $m\times n$ dual matrices.
Since every entry of a dual matrix is   a  dual number,
the dual matrix $\widehat{M}\in\mathbb{D}^{m\times n}$ can be expressed as
$\widehat M=M+\varepsilon M_0$,
where $M, M_{0}\in\mathbb{R}^{m\times n}$.
 The symbol $I_{n}$ stands for the $n\times n$ identity matrix.
Let $M \in \mathbb{R}^{n \times n}$,
if there exists a positive integer $k$
  such that  $\rk(M^{k+1})=\rk(M^{k})$,
then $k$ is called the index of $M$,
 and is denoted by ${\ind}(M) = k$.
Let two dual matrices $\widehat{M}\in\mathbb{D}^{n\times n}$  and   $\widehat{L}\in\mathbb{D}^{n\times n}$, if $\widehat{M}\widehat{L}=\widehat{L}\widehat{M}=I_{n}$, then $\widehat{M}$ is invertible and $\widehat{L}$ is the dual inverse matrix of $\widehat{M}$, denoted as $\widehat{L}=\widehat{M}^{-1}$.
However, the inverse of the dual matrix does not always exist,
and can be used to solve relevant practical problems only when it exists.
Such problems are as dual linear systems  in the field of mechanical engineering \cite{yundongxuefenxi--2}.
Consequently, to address this issue, the dual generalized inverse is presented.

Recently,
the dual generalized inverse matrices of dual matrix have attracted a lot of attention, especially the dual Moore-Penrose generalized inverse.
 Given a $m\times n$ dual matrix $\widehat M$,
if there exists a dual matrix $\widehat{X}\in\mathbb{D}^{n\times m}$ satisfying the  Penrose conditions:
\begin{align}
\label{DMPGI-Def}
\widehat{M}\widehat{X}\widehat{M}=\widehat{M},\
\widehat{X}\widehat{M}\widehat{X}=\widehat{X},\
\left(\widehat{M}\widehat{X}\right)^{T}=\widehat{M}\widehat{X},\
\left(\widehat{X}\widehat{M}\right)^{T}=\widehat{X}\widehat{M},
\end{align}
then $\widehat{X}$ is called the   {dual Moore-Penrose inverse} (DMPI)
of $\widehat{M}$, and denoted as $\widehat{M}^{\dag}$.
Wang \cite{Wang--DMPGI} gets two necessary and sufficient conditions for a dual matrix to have the DMPI.

In \cite{zhong--dual group inverse},
Zhong and Zhang introduce the dual group inverse
and consider the existence, computation and applications of the inverse.
 Given a $n\times n$ dual matrix $\widehat M=M+\varepsilon M_0$  with $\ind(M)=1$.
If there exists a dual matrix $\widehat{X}\in\mathbb{D}^{n\times n}$ satisfying
\begin{align}
\label{DGI-Def}
\widehat{M}\widehat{X}\widehat{M}=\widehat{M},\
\widehat{X}\widehat{M}\widehat{X}=\widehat{X},\
\widehat{M}\widehat{X}=\widehat{X}\widehat{M},
\end{align}
then $\widehat{X}$ is called the   {dual group   inverse} (DGI) 
of $\widehat{M}$,
and  denoted as $\widehat{X}=\widehat{M}^{\#}$.
The DGI is applied to the least-squares problem of  dual linear systems.
In \cite{Wang and Gao},
Wang and Gao introduce  the
dual index and the dual core  inverse.
Given a $n\times n$ dual matrix $\widehat M=M+\varepsilon M_0$ with $\ind(M)=1$,
if there exists a dual matrix $\widehat{X}\in\mathbb{D}^{n\times n}$ satisfying
\begin{align}
\label{DCI-Def}
\widehat{M}\widehat{X}\widehat{M}=\widehat{M},\
\widehat{X} \widehat{M}\widehat{X} =\widehat{X},\
\left(\widehat{M}\widehat{X}\right)^{T}=\widehat{M}\widehat{X},
\end{align}
then $\widehat{X}$ is called the    {dual  core   inverse} (DCI) of $\widehat{M}$,
and denoted as  $\widehat{X}=\widehat{M}^{\tiny\textcircled{\#}}$.
%
Wang and Gao \cite{Wang and Gao}
  prove  that the DCI of $\widehat M$ exists if and only if  the index of $\widehat M$ is 1,
and provide  some properties and characterizations of the inverse.
Furthermore, Zhong and Zhang \cite{zhong--dual Drazin inverse}
introduce  the dual Drazin   inverse.
Given a $n\times n$ dual matrix $\widehat M=M+\varepsilon M_0$, ${\mbox{\ind}}\left(M\right)=k$,
if there exists a dual matrix $\widehat{X}\in\mathbb{D}^{n\times n}$ satisfying
\begin{align}
\label{DDI-Def}
\widehat{M}\widehat{X}\widehat{M}^{k}=\widehat{M}^{k},\
\widehat{X}\widehat{M}\widehat{X}=\widehat{X},\
\widehat{M}\widehat{X}=\widehat{X}\widehat{M},
\end{align}
then $\widehat{X}$ is called the  {dual Drazin inverse} (DDI) of $\widehat{M}$,
and denoted as  $\widehat{X}=\widehat{M}^{D}$.
When it exists,
Zhong and Zhang \cite{zhong--dual Drazin inverse}
 prove that the DDI  is unique,
    provide  some sufficient and necessary conditions for its existence
  and give a compact formula for the DDI.

It should be pointed out that
the  above-mentioned  dual generalized inverses do not exist for every dual matrices.
Therefore, further work is needed.
In \cite{qi20230804,Li and Wang},
 the authors introduce the weak dual generalized inverse
that every dual matrix has the dual generalized inverse.
 Given a $m\times n$ dual matrix $\widehat M$,
 there exists a unique dual matrix  $\widehat{X}\in\mathbb{D}^{n\times m}$
   satisfying
\begin{align}
\label{g-DMPGI-Def}
\widehat{M}^{T}\widehat{M}\widehat{X}\widehat{M}\widehat{M}^{T}=
\widehat{M}^{T}\widehat{M}\widehat{M}^{T},\
\widehat{X}\widehat{M}\widehat{X}=\widehat{X},\
\left(\widehat{M}\widehat{X}\right)^{T}=\widehat{M}\widehat{X},\
\left(\widehat{X}\widehat{M}\right)^{T}=\widehat{X}\widehat{M},
\end{align}
then $\widehat{X}$ is called the  {weak dual generalized inverse} (WDGI) of $\widehat{M}$,
and denoted as  $\widehat{X}= \widehat{M}^{ \dag}_{\rm W}$.
It is easy to check that
$\widehat{M}^{\dag}_{\rm W}
=\left( \widehat{M}^{T}\widehat{M} \right)^\dag\widehat{M}^{T}
=\widehat{M}^{T}\left(\widehat{M}\widehat{M}^{T} \right)^\dag$.
The WDGI is  a generalization of DMPI and
can be used to solve more general dual linear system problems \cite{qi20230804,Li and Wang}.

  In this paper,
motivated by the works proposed above,
we consider extending  the DDI
so that the new inverse of any square dual matrix exists.
Under the condition that the DDI exists, the new inverse coincides with the DDI.
We also discuss its properties and characterizations.
Furthermore, we introduce a generalized DGI,
provide its characterizations,
and apply it to discuss problems such as constrained dual matrix equations.

\section{Preliminaries}
\label{Section-2-Preliminaries}
In this section,
 we present some preliminary results.
The first is the core-EP decomposition, which is the most powerful tool for studying the Drazin inverse.

\begin{theorem}
[{\cite{Generalized Inverses book}}]
Let $M\in\mathbb{R}^{n\times n}$
with ${\ind}\left( M \right)=k$ and
${\rm rank}\left( M^k \right)=r$.
Then there exists an invertible matrix
$  P  $
such that 
\begin{align}
\label{R-C-N-Decomp}
 M=P\begin{bmatrix}
        C&O  \\
        O&N         \end{bmatrix}P^{-1},
\end{align}
where  $C\in\mathbb{R}^{r\times r}$ is nonsingular and $N$ is nilpotent.
\end{theorem}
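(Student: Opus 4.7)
The plan is to exhibit the decomposition by choosing $P$ whose columns form a basis for $\mathbb{R}^{n}$ adapted to the $M$-invariant splitting $\mathbb{R}^{n}=R(M^{k})\oplus N(M^{k})$, where $R(M^{k})$ and $N(M^{k})$ denote the range and null space of $M^{k}$. On the first summand $M$ will act invertibly, giving the block $C$, and on the second summand $M$ will act nilpotently, giving the block $N$.

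First I would establish the splitting. The definition $\ind(M)=k$ gives $\rk(M^{k+1})=\rk(M^{k})$, and by iterating one obtains $\rk(M^{k+j})=\rk(M^{k})=r$ for every $j\ge 0$; in particular $R(M^{k})=R(M^{2k})$ and $N(M^{k})=N(M^{2k})$. From this I would deduce $R(M^{k})\cap N(M^{k})=\{0\}$: if $x=M^{k}y$ and $M^{k}x=0$, then $y\in N(M^{2k})=N(M^{k})$, so $x=0$. A dimension count, $\dim R(M^{k})+\dim N(M^{k})=r+(n-r)=n$, then yields the direct-sum decomposition $\mathbb{R}^{n}=R(M^{k})\oplus N(M^{k})$.

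Next I would verify that both summands are $M$-invariant. For $R(M^{k})$ this is immediate: $M\cdot M^{k}y=M^{k}(My)\in R(M^{k})$. For $N(M^{k})$: if $M^{k}x=0$, then $M^{k}(Mx)=M(M^{k}x)=0$, so $Mx\in N(M^{k})$. Moreover, the restriction $M\big|_{R(M^{k})}$ is surjective (since $M\cdot R(M^{k})=R(M^{k+1})=R(M^{k})$ by the rank equality) hence invertible on the $r$-dimensional space $R(M^{k})$, and the restriction $M\big|_{N(M^{k})}$ satisfies $M^{k}=0$ on $N(M^{k})$, hence is nilpotent.

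Finally, I would pick an ordered basis of $\mathbb{R}^{n}$ by concatenating a basis of $R(M^{k})$ with a basis of $N(M^{k})$ and let $P$ be the $n\times n$ matrix whose columns are these basis vectors; then $P$ is invertible, and the matrix of $M$ in this basis, namely $P^{-1}MP$, is block diagonal with the nonsingular $r\times r$ matrix $C$ of $M\big|_{R(M^{k})}$ in the upper-left block and the nilpotent matrix $N$ of $M\big|_{N(M^{k})}$ in the lower-right block. Rearranging gives the asserted form $M=P\begin{bmatrix}C & O\\ O & N\end{bmatrix}P^{-1}$. The only real obstacle is the direct-sum claim; everything else is bookkeeping once the splitting and invariance are in hand, so I expect the proof to be short.
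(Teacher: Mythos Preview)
Your argument is correct and is the standard proof of the core--nilpotent decomposition. Note, however, that the paper does not actually prove this theorem: it is stated in Section~\ref{Section-2-Preliminaries} as a preliminary result cited from \cite{Generalized Inverses book}, with no proof given. So there is nothing in the paper to compare against; your proof is precisely the classical range--kernel splitting argument one finds in that reference.
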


Based on the above decomposition,
 a clear and concise characterization of the Drazin inverse is given.

\begin{lemma}
[\cite{Generalized Inverses book}]
\label{C-N-Decomposion-Th}
Let the core-nilpotent decomposition of $M\in\mathbb{R}^{n\times n}$  be given as in (\ref{R-C-N-Decomp}),
then
\begin{align}
\label{R-C-N-Decomp-Drazin}
M^D=P\begin{bmatrix}
        C^{-1}&O  \\
        O&O        \end{bmatrix}P^{-1}.
\end{align}
\end{lemma}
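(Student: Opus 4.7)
The plan is to verify directly that the matrix
\[
X := P\begin{bmatrix} C^{-1} & O \\ O & O \end{bmatrix} P^{-1}
\]
satisfies the three defining equations $M^{k+1}X = M^{k}$, $XMX = X$, and $MX = XM$ of the Drazin inverse, after which the uniqueness of $M^{D}$ yields the claimed formula. Since the Drazin inverse is well known to be unique whenever it exists, producing such an $X$ is all that is needed.

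First I would exploit the block form of the core--nilpotent decomposition to record that, for every positive integer $j$,
\[
M^{j} = P\begin{bmatrix} C^{j} & O \\ O & N^{j} \end{bmatrix} P^{-1}.
\]
Because $C$ is nonsingular we have $\rk(M^{j}) = r + \rk(N^{j})$, so the hypothesis $\ind(M)=k$, which amounts to $\rk(M^{k+1})=\rk(M^{k})$, forces $\rk(N^{k+1})=\rk(N^{k})$; combined with nilpotency of $N$ this yields $N^{k}=O$. This is the one non-automatic input that the rest of the argument leans on.

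With $N^{k}=O$ in hand, the remaining steps are routine block multiplications. Both $MX$ and $XM$ collapse to $P\begin{bmatrix} I_{r} & O \\ O & O \end{bmatrix} P^{-1}$, which settles the commutation identity. The product $XMX$ then reduces to $X$ by the idempotence of the block $\begin{bmatrix} I_{r} & O \\ O & O \end{bmatrix}$. Finally,
\[
M^{k+1}X = P\begin{bmatrix} C^{k+1} & O \\ O & N^{k+1} \end{bmatrix}\begin{bmatrix} C^{-1} & O \\ O & O \end{bmatrix} P^{-1} = P\begin{bmatrix} C^{k} & O \\ O & O \end{bmatrix} P^{-1}
\]
agrees with $M^{k}$ precisely because the $(2,2)$ block $N^{k}$ vanishes. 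No step presents a genuine obstacle; the only substantive deduction is the passage from the index hypothesis to the nilpotency identity $N^{k}=O$, after which everything is bookkeeping with block-diagonal matrices.
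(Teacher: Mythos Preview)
Your proposal is correct. The paper does not supply its own proof of this lemma; it merely cites the result from Ben--Israel and Greville, so there is no in-paper argument to compare against. Your direct verification of the three Drazin-inverse axioms via the block-diagonal form is exactly the standard textbook proof, and your deduction that $N^{k}=O$ from the index condition is sound (indeed, one could reach it even more quickly by noting that Theorem~\ref{R-C-N-Decomp} already records $\rk(M^{k})=r$, whence $\rk(N^{k})=0$).
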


By applying (\ref{R-C-N-Decomp}) and (\ref{R-C-N-Decomp-Drazin}),
 Zhong et al. \cite{zhong--dual group inverse,zhong--dual Drazin inverse}
 get the characterizations of the DGI and DDI, respectively.

\begin{lemma}
[\cite{zhong--dual group inverse}]
\label{le-DGI-CGGI}
Let   $\widehat M=M+\varepsilon M_0  \in\mathbb{D}^{n\times n} $,
 ${\mbox{\ind}}\left({M}\right)=1$ and $\left(I_n-MM^{\dag}\right)M_0\left(I_n-M^{\dag}M\right)=O$.
Then the DGI $\widehat{M}^{\#}$ of $\widehat{M}$ exists
and
\begin{align*}
\widehat{M}^{\#}=&M^{\#}
\\&+\varepsilon
\left({\left(M^{\#}\right)}^2M_0\left(I_n-MM^{\#}\right)
+\left(I_n-MM^{\#}\right)M_0{\left(M^{\#}\right)}^2-M^{\#}M_0M^{\#}\right).
\end{align*}
\end{lemma}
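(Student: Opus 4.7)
The approach is direct verification. I set $\widehat{X} = M^\# + \varepsilon X_0$ with
$X_0 = (M^\#)^2 M_0(I_n - MM^\#) + (I_n - MM^\#)M_0(M^\#)^2 - M^\# M_0 M^\#$,
and substitute into the three DGI-defining equations \eqref{DGI-Def}. Using $\varepsilon^2 = 0$, each equation splits into a standard part (in $M$ and $M^\#$ only) and a dual part (linear in $M_0$ and $X_0$). The standard parts are precisely the defining equations of the group inverse $M^\#$ and hold because $\ind(M) = 1$.

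For the dual parts I would exploit the standard group-inverse identities $MM^\# = M^\# M$, $MM^\# M = M$, $M^\# M M^\# = M^\#$, together with their consequences $M(M^\#)^2 = (M^\#)^2 M = M^\#$ and $M(I_n - MM^\#) = (I_n - MM^\#)M = 0$. These collapse the products $MX_0$, $X_0 M$, $MM^\# X_0$, and $X_0 MM^\#$ to short expressions. The commutation condition $\widehat{M}\widehat{X} = \widehat{X}\widehat{M}$ reduces to $MX_0 - X_0 M = M^\# M_0 - M_0 M^\#$, and the reflexivity condition $\widehat{X}\widehat{M}\widehat{X} = \widehat{X}$ reduces to $X_0 MM^\# + M^\# M_0 M^\# + MM^\# X_0 = X_0$; both are unconditional algebraic identities that the chosen $X_0$ satisfies, so no hypothesis on $M_0$ is needed here.

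The main obstacle is the equation $\widehat{M}\widehat{X}\widehat{M} = \widehat{M}$, which is the only place the hypothesis enters. Its dual part demands $MX_0 M = M_0 - MM^\# M_0 - M_0 MM^\#$. A direct computation, using $M(I_n - MM^\#) = 0$ to kill two of the three summands of $X_0$ and using $M^\# M = MM^\#$ throughout, collapses $MX_0 M$ to $-MM^\# M_0 MM^\#$. Equating the two expressions for $MX_0 M$ gives $M_0 - MM^\# M_0 - M_0 MM^\# + MM^\# M_0 MM^\# = 0$, i.e. $(I_n - MM^\#)M_0(I_n - MM^\#) = 0$, which is exactly the stated hypothesis (since $MM^\# = M^\# M$). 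This verifies all three DGI equations, and the explicit formula for $\widehat{M}^\#$ then follows by uniqueness of the DGI.
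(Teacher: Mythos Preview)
The paper does not prove this lemma; it is quoted from \cite{zhong--dual group inverse} without proof, so there is no in-paper argument to compare against directly. Your direct-verification strategy is sound, and the computations for the commutation and reflexivity conditions are correct. For what it is worth, the paper's own proofs of the closely related Theorems~\ref{Th3.2} and~\ref{Th-GDDI-EQ} proceed instead via the core--nilpotent block decomposition $M = P\,\mathrm{diag}(C,0)\,P^{-1}$, partitioning $M_0$ and reading off everything in $2\times 2$ blocks; your coordinate-free method is more elementary and arguably cleaner here.

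There is, however, one genuine gap in your final step. You correctly reduce $\widehat{M}\widehat{X}\widehat{M}=\widehat{M}$ to the condition $(I_n - MM^{\#})M_0(I_n - MM^{\#})=O$ and then assert that this ``is exactly the stated hypothesis (since $MM^{\#}=M^{\#}M$).'' But the hypothesis in the lemma is stated with the Moore--Penrose inverse, namely $(I_n - MM^{\dag})M_0(I_n - M^{\dag}M)=O$, not with the group inverse. Your parenthetical about $MM^{\#}=M^{\#}M$ only explains why the left and right factors in your condition coincide; it does not connect $M^{\#}$ to $M^{\dag}$. The two conditions are in fact equivalent when $\mathrm{ind}(M)=1$ --- both assert that $M_0$ maps $\mathcal{N}(M)$ into $\mathcal{R}(M)$; concretely, since $I-MM^{\#}$ annihilates $\mathcal{R}(M)=\mathcal{R}(MM^{\dag})$ and has range $\mathcal{N}(M)=\mathcal{N}(M^{\dag}M)$, one has $(I-MM^{\#})=(I-MM^{\#})(I-MM^{\dag})$ and $(I-MM^{\#})=(I-M^{\dag}M)(I-MM^{\#})$, so the $M^{\dag}$-hypothesis immediately implies your $M^{\#}$-condition --- but you need to supply that one-line argument.
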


 \begin{lemma}
[\cite{zhong--dual Drazin inverse}]
\label{le-1.1}
Let  $\widehat M=M+\varepsilon M_0 \in\mathbb{D}^{n\times n} $ with
${\mbox{\ind}}\left(M\right)=k$.
Then $\widehat M^{k}=M^{k}+\varepsilon K_0 $, in which $K_0=\overset{k}{\underset{i=1}\sum}M^{k-i}M_0M^{i-1}$.
Furthermore, if $\left(I_n-MM^{D}\right)K_0\left(I_n-MM^{D}\right)=O$,
the DDI $\widehat M^{D}$ of $\widehat{M}$ exists and
$$\widehat M^{D}=M^{D}+\varepsilon S,$$
where
\begin{align}
\nonumber
S&={\left(M^D\right)}^2
\left(\overset{k-1}{\underset{i=0}\sum}{\left(M^D\right)}^iM_0M^i\right)
\left(I_n-MM^D\right)
\\
\nonumber
&\qquad\qquad +
\left(I_n-MM^D\right)
\left(\overset{k-1}{\underset{i=0}\sum}M^iM_0{\left(M^D\right)}^i\right)
{\left(M^D\right)}^2-M^DM_0M^D.
\end{align}
\end{lemma}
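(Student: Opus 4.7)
The plan has two parts corresponding to the two claims in the lemma.

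For the formula $\widehat M^k = M^k + \varepsilon K_0$, I would argue by induction on the exponent. Expanding $\widehat M^j = (M+\varepsilon M_0)^j$, every term containing two or more factors of $\varepsilon M_0$ vanishes because $\varepsilon^2=0$, so only the pure word $M^j$ and the $j$ single-$M_0$ words $M^{j-1}M_0,\, M^{j-2}M_0 M,\ldots,\, M_0 M^{j-1}$ (each carrying a factor $\varepsilon$) survive. Setting $j=k$ yields the stated expression for $K_0$.

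For the DDI, I would adopt the ansatz $\widehat X = M^D + \varepsilon S$ and verify the three defining conditions (\ref{DDI-Def}) directly. Substituting $\widehat M = M + \varepsilon M_0$, $\widehat X = M^D + \varepsilon S$, and the formula from the first part for $\widehat M^k$, each condition separates into a standard-part identity and a dual-part identity. The standard parts reduce to the classical Drazin relations for $M^D$ and hold automatically, so the proof reduces to verifying three dual identities, namely
\begin{align*}
MS + M_0 M^D &= SM + M^D M_0, \\
M^D M S + S M M^D + M^D M_0 M^D &= S, \\
M S M^k + M M^D K_0 + M_0 M^D M^k &= K_0.
\end{align*}
The first two admit a direct check: substituting the explicit $S$ and repeatedly invoking $MM^D = M^D M$, $M^D M M^D = M^D$, $M^{k+1}M^D = M^k$, together with the annihilation properties $(I-MM^D)M^D = O$, $M^D M(I-MM^D) = O$, and $M^j(I-MM^D) = O$ for $j \ge k$, collapses the cross-terms while the central term $-M^D M_0 M^D$ of $S$ absorbs the remainder.

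The main obstacle is the third identity, where the hypothesis $(I_n-MM^D)K_0(I_n-MM^D) = O$ must enter. The cleanest route, I believe, is to invoke the core-nilpotent decomposition (\ref{R-C-N-Decomp}): writing $M_0 = P\begin{bmatrix}A&B\\D&E\end{bmatrix}P^{-1}$ in the block form compatible with $M$, the hypothesis translates precisely to $\sum_{i=1}^k N^{k-i} E N^{i-1} = O$, which is exactly what is needed to close the nilpotent block of the third identity. Each of the three dual identities then decouples into four block equations that can be verified directly using the invertibility of $C$ and the nilpotency of $N$. An intrinsic alternative is to multiply the third identity on the left and right by $I-MM^D$, use the hypothesis to eliminate the nilpotent block, and then check that the remaining ``core'' contribution is automatic from the construction of $S$, whose outer factor $(I-MM^D)$ in the second summand is designed precisely so that the telescoping $\sum_{i=0}^{k-1} M^i M_0 (M^D)^i$ matches $K_0 M M^D$ after multiplication by $M^{k+1}$.
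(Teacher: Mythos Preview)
The paper does not actually prove this lemma: it is quoted from \cite{zhong--dual Drazin inverse} and stated without proof. So there is no ``paper's own proof'' to compare against directly.

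That said, your strategy is sound and in fact mirrors exactly how the paper handles the closely related (and more general) Theorem~\ref{Th3.2}, which covers the weak DDI with the dual index $t$ in place of $k$. There the authors also substitute the ansatz, separate standard and dual parts to obtain precisely the three identities you list (these are the content of Theorem~\ref{20220312-Th-1}, equations~(\ref{Th3.1-eq2})--(\ref{Th3.1-eq4})), and then pass to the core--nilpotent decomposition~(\ref{R-C-N-Decomp}) with $M_0$ partitioned into blocks, verifying the conditions blockwise. Your observation that the hypothesis $(I_n-MM^D)K_0(I_n-MM^D)=O$ translates into the vanishing of the $(2,2)$ block $\sum_{i=1}^k N^{k-i}E\,N^{i-1}$ is exactly the mechanism: in the $t$-index setting of Theorem~\ref{Th3.2} that block vanishes automatically (since $t\ge 2k$ would force it, or more precisely ${\Dind}(\widehat M)=t$ guarantees the nilpotent part of $\widehat M^t$ is zero), whereas here with $k$ the hypothesis must supply it. So your plan is both correct and the natural specialization of the paper's own argument for Theorem~\ref{Th3.2}.
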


Let $ \widehat M=M+\varepsilon M_0\in\mathbb{D}^{n\times n}$.
Denote
the  appreciable rank of $ \widehat M$ as
$\Ark\left( \widehat M \right)=\rk (M)$,
 the rank of $ \widehat M$ as
$\Drk\left( \widehat M \right)
=
\rk\left[\begin{matrix}
 M_0  &     M           \\
 M      &    O
\end{matrix}\right]-\rk (M) $,
the appreciable index of $\widehat M $  as
$ {\Aind} \left(\widehat M\right)={\ind} \left(  M\right)$,
 and
 the   index of $\widehat M $ as
 $ {\Dind} \left(\widehat M\right)$,
which is the smallest positive integer $t$
$\left(k \leq t \leq 2k,\ k={\ind} \left(  M\right)\right)$
such that
${\rm ARank}\left(\widehat M^{t}\right)
= {\rm Rank}\left(\widehat M^{t}\right)$, \cite{Wang2023LaaSub}.

\begin{lemma}[\cite{Wang2023LaaSub}]
\label{dual-C-N}
Let
$\widehat  M = M + \varepsilon M_0 \in\mathbb{D}^{n\times n}$
with
${\mbox{\Dind}}\left(\widehat{M}\right)=t$,
${\Drk}\left( \widehat{M}^t \right)=r$.
Then there exists an invertible dual matrix
$\widehat  P  \in\mathbb{D}^{n\times n}$
such that 
\begin{align}
\label{C-N-D}
\widehat M =
\widehat P
\begin{bmatrix}
 \widehat C &       O     \\
        O   & \widehat N
\end{bmatrix}\widehat P^{-1},
\end{align}
where  $\widehat{C} \in \mathbb{D}^{r \times r}$ is  invertible,
and
$\widehat{N}$ is nilpotent, ie., $\widehat{N}^t=O$.

Furthermore,
when
${\mbox{\Dind}}\left(\widehat{M}\right)= {\Aind}\left( \widehat{M} \right)$,
the DDI $\widehat M^{D}$ of $\widehat{M}$ exists and
 \begin{align}
\label{C-N-D-DGI}
\widehat M^D =
\widehat P
\begin{bmatrix}
 \widehat C ^{-1}&       O     \\
        O   &O
\end{bmatrix}\widehat P^{-1}.
\end{align}
\end{lemma}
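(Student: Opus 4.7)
The plan is to lift the classical core-nilpotent decomposition of $M$ to the dual setting in two stages: first reduce by a primal similarity, then eliminate the off-diagonal dual blocks by a similarity of the form $I_n+\varepsilon A$. Starting from $M=P\,\mathrm{diag}(C,N)\,P^{-1}$ with $C\in\mathbb{R}^{r\times r}$ invertible (so $r=\rk(M^k)$) and $N^k=O$, conjugating $\widehat M$ by $P$ yields $P^{-1}\widehat M P=\mathrm{diag}(C,N)+\varepsilon E$, with $E=P^{-1}M_0P$ split conformally into blocks $E_{ij}$.

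Next, take $\widehat T=I_n+\varepsilon A$, where $A$ has zero diagonal blocks and off-diagonal blocks $T_{12},T_{21}$. Since $\varepsilon^{2}=0$, one has $\widehat T^{-1}=I_n-\varepsilon A$, and a short expansion shows that the similarity $\widehat T^{-1}(P^{-1}\widehat M P)\widehat T$ has vanishing off-diagonal dual blocks exactly when $T_{12},T_{21}$ solve the Sylvester equations $CT_{12}-T_{12}N=-E_{12}$ and $NT_{21}-T_{21}C=-E_{21}$. Both are uniquely solvable because $C$ is invertible and $N$ is nilpotent, so their spectra are disjoint. Setting $\widehat P=P\widehat T$ (dual invertible, with $\widehat P^{-1}=\widehat T^{-1}P^{-1}$) gives $\widehat P^{-1}\widehat M\widehat P=\mathrm{diag}(\widehat C,\widehat N)$ with $\widehat C=C+\varepsilon E_{11}$ dual invertible and $\widehat N=N+\varepsilon E_{22}$.

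The main obstacle is to prove $\widehat N^{\,t}=O$. The primal part vanishes automatically since $N^k=O$ and $t\ge k$, so it suffices to kill the dual part. My plan is to argue via dual rank: $\Drk$ is invariant under dual similarity and additive on block-diagonal dual matrices (both follow from routine manipulations of the defining rank formula). Hence $\Drk(\widehat M^{\,t})=\Drk(\widehat C^{\,t})+\Drk(\widehat N^{\,t})=r+\Drk(\widehat N^{\,t})$, while $\Ark(\widehat M^{\,t})=r$, so the defining equality $\Drk(\widehat M^{\,t})=\Ark(\widehat M^{\,t})$ for $t=\Dind(\widehat M)$ forces $\Drk(\widehat N^{\,t})=0$, i.e.\ $\widehat N^{\,t}=O$. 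This gives \eqref{C-N-D}.

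For the DDI formula \eqref{C-N-D-DGI}, assume $\Dind(\widehat M)=\Aind(\widehat M)$, so we may take $t=k$ and $\widehat N^{k}=O$. Defining $\widehat X=\widehat P\,\mathrm{diag}(\widehat C^{-1},O)\,\widehat P^{-1}$, block-diagonal arithmetic makes $\widehat M\widehat X=\widehat X\widehat M$ and $\widehat X\widehat M\widehat X=\widehat X$ immediate, while $\widehat M\widehat X\widehat M^{k}=\widehat M^{k}$ follows using $\widehat N^{k}=O$ in the lower block. Uniqueness of the DDI (Lemma~\ref{le-1.1}) then identifies $\widehat X=\widehat M^{D}$.
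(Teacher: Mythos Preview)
The paper does not actually prove this lemma: it is quoted verbatim from \cite{Wang2023LaaSub} with no accompanying argument, so there is no in-paper proof to compare against. Your task therefore reduces to whether your argument stands on its own, and it does. The two-stage reduction (primal similarity followed by a Sylvester-type correction $I_n+\varepsilon A$) is exactly the natural way to lift the core--nilpotent decomposition to the dual setting, and the disjoint-spectra justification for solvability of $CT_{12}-T_{12}N=-E_{12}$ and $NT_{21}-T_{21}C=-E_{21}$ is correct. Your nilpotency step is the right idea: since $t=\Dind(\widehat M)$ is by definition the first exponent with $\Drk(\widehat M^{t})=\Ark(\widehat M^{t})=r$, block-additivity and similarity-invariance of $\Drk$ force $\Drk(\widehat N^{t})=0$, and for a dual matrix with zero primal part this is equivalent to $\widehat N^{t}=O$. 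The verification of \eqref{C-N-D-DGI} is routine once $\widehat N^{k}=O$.

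Two small points worth tightening. First, you should note explicitly that your $r=\rk(M^{k})$ agrees with the lemma's $r=\Drk(\widehat M^{t})$; this holds precisely because $t\ge k$ gives $\Ark(\widehat M^{t})=\rk(M^{k})$ and the defining equality at $t$ closes the loop. Second, the invariance of $\Drk$ under multiplication by an invertible dual matrix is not entirely ``routine'': it follows from the identity
\[
\begin{bmatrix}Q & Q_0 \\ O & Q\end{bmatrix}
\begin{bmatrix}M_0 & M \\ M & O\end{bmatrix}
=
\begin{bmatrix}QM_0+Q_0M & QM \\ QM & O\end{bmatrix},
\]
with the left factor invertible whenever $Q$ is, and the analogous identity on the right. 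You should spell this out rather than defer it. Finally, for the last sentence, cite uniqueness of the DDI from \cite{zhong--dual Drazin inverse} rather than Lemma~\ref{le-1.1}, which gives a formula but does not itself assert uniqueness.
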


\section{Weak dual Drazin inverse}
%
In this section,
   we introduce   a generalized DDI.
   This inverse   exists for any square dual matrix.
 \begin{theorem}
\label{20220312-Th-1}
Let $\widehat{M}=M+\varepsilon M_0\in\mathbb{D}^{n\times n}$
with
${\mbox{\Dind}}\left(\widehat{M}\right)=t$.
Denote $K=\overset{t}{\underset{i=1}\sum}M^{t-i}M_0M^{i-1}$.
Then
 $\widehat M^{t}=M^{t}+\varepsilon K$
and
 there exists  a $n\times n$ dual matrix $\widehat X=X+\varepsilon S$ satisfying
\begin{align}
\label{GDGI-def}
\widehat M\widehat X\widehat M^{t}=\widehat M^{t}, \
\widehat X\widehat M\widehat X=\widehat X, \
\widehat M\widehat X=\widehat X\widehat M
\end{align}
if and only if
\begin{align}
\label{Th3.1-eq1}
X&=M^D,
\\
\label{Th3.1-eq2}
K&=MM^DK+MSM^{t}+M_0M^DM^{t},
\\
\label{Th3.1-eq3}
S&=M^DMS+M^DM_0M^D+SMM^D,
\\
\label{Th3.1-eq4}
MS+M_0M^D&=SM+M^DM_0.
\end{align}
\end{theorem}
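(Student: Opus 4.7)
The plan is to work coefficient-wise in $\varepsilon$. First I would expand $\widehat{M}^t = (M + \varepsilon M_0)^t$ using $\varepsilon^2 = 0$ to obtain $\widehat{M}^t = M^t + \varepsilon K$ with $K = \sum_{i=1}^t M^{t-i} M_0 M^{i-1}$; this is the same computation as in Lemma~\ref{le-1.1} with the power $t$ in place of $k$. Substituting $\widehat{M} = M + \varepsilon M_0$, $\widehat{X} = X + \varepsilon S$, and this expression for $\widehat{M}^t$ into the three identities in (\ref{GDGI-def}), each identity splits into a standard-part equation (coefficient of $\varepsilon^0$) involving only $X$, and a dual-part equation (coefficient of $\varepsilon^1$) linking $X$, $S$, $M_0$, and $K$.

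The standard parts read $MXM^t = M^t$, $XMX = X$, and $MX = XM$. Since $\Dind(\widehat{M}) = t$ forces $\ind(M) = k \le t$, these form a Drazin-type system with exponent $t$; I would apply the core-nilpotent decomposition (\ref{R-C-N-Decomp}) and write $M = P\,\mathrm{diag}(C,N)\,P^{-1}$ and $X$ in the corresponding $2 \times 2$ block form. The commutation $MX = XM$ forces the off-diagonal blocks of $X$ to vanish, because $C^t X_{12} = X_{12} N^t = O$ with $C^t$ invertible and $N^t = O$; the equation $MXM^t = M^t$ then pins $X_{11} = C^{-1}$, and the nilpotent block drops out by iterating $X_{22} = X_{22} N X_{22}$ together with $NX_{22} = X_{22}N$ and $N^t = O$, giving $X_{22} = (X_{22} N)^t X_{22} = X_{22}^t N^t X_{22} = O$. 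Hence $X = M^D$, which is (\ref{Th3.1-eq1}).

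Substituting $X = M^D$ into the $\varepsilon$-coefficients of the three identities produces precisely (\ref{Th3.1-eq2})--(\ref{Th3.1-eq4}) by direct expansion; for instance, the dual part of $\widehat{M}\widehat{X}\widehat{M}^t = \widehat{M}^t$ is $M_0 M^D M^t + MSM^t + MM^D K = K$, which is (\ref{Th3.1-eq2}), and the other two are entirely analogous. The converse direction is immediate: given $X = M^D$ and an $S$ satisfying (\ref{Th3.1-eq2})--(\ref{Th3.1-eq4}), reassembling the standard and dual parts recovers the three identities in (\ref{GDGI-def}), using the standard Drazin identities $MM^D = M^D M$, $M^{t+1} M^D = M^t$ (valid since $t \ge k$), and $M^D M M^D = M^D$ to close the computation.

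The main obstacle will be the uniqueness step, namely verifying that the three standard-part equations genuinely pin $X$ down to $M^D$ even though the first equation uses exponent $t$ rather than the classical $k$. The critical observation is that the commutativity $NX_{22} = X_{22}N$ makes $(X_{22}N)^t$ collapse to $X_{22}^t N^t = O$; this is where the hypothesis $t \ge \ind(M)$ enters decisively. Everything else reduces to bookkeeping in the dual algebra.
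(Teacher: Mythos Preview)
Your proposal is correct and follows essentially the same route as the paper: both expand the three identities in (\ref{GDGI-def}) into standard and dual parts, deduce $X=M^D$ from the standard part (using $t\ge k=\ind(M)$), and then read off (\ref{Th3.1-eq2})--(\ref{Th3.1-eq4}) from the $\varepsilon$-coefficients. The only difference is cosmetic: the paper simply invokes the known uniqueness of the Drazin inverse for the system $MXM^{t}=M^{t}$, $XMX=X$, $MX=XM$ with $t\ge k$, whereas you spell this out explicitly via the core--nilpotent block decomposition.
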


\begin{proof}
Let $\widehat{M}=M+\varepsilon M_0\in\mathbb{D}^{n\times n}$
with
${\mbox{\Aind}}\left(\widehat {M}\right)=k$
and
${\mbox{\Dind}}\left(\widehat{M}\right)=t$,
and   $\widehat X=X+\varepsilon S\in\mathbb{D}^{n\times n}$.
Denote $K=\overset{t}{\underset{i=1}\sum}M^{t-i}M_0M^{i-1}$.
It is easy to check that
 $\widehat M^{t}=M^{t}+\varepsilon K$.
Substituting
 $\widehat X$ into
$\widehat M\widehat X\widehat M^{t}=\widehat M^{t}$,
$\widehat X\widehat M\widehat X=\widehat X$ and
$\widehat M\widehat X=\widehat X\widehat M$,
we get
\begin{align}
\nonumber
\left(M+\varepsilon M_0\right)
\left(X+\varepsilon S\right)(M^{t}+\varepsilon K)
&=MXM^{t}+\varepsilon\left(MXK+MSM^{t}+M_0SM^{t}\right)
\\
\nonumber
&
=M^{t}+\varepsilon K,
\\
\nonumber
\left(X+\varepsilon S\right)\left(M+\varepsilon M_0\right)(X+\varepsilon S)
&
=XMX+\varepsilon\left(XMS+XM_0X+SMX\right)
=X+\varepsilon S,
\\
\nonumber
MX+\varepsilon\left(MS+M_0X\right)
&=XM+\varepsilon\left(XM_0+SM\right).
\end{align}
Therefore,
$MXM^{t} =M^{t}$, $ X MX=X$, $ MX=XM$,
$K =MXK+MSM^{t}+M_0XM^{t}$,
$S =XMS+XM_0X+SMX$
and
$MS+M_0X =SM+XM_0$.
Since ${\mbox{\Aind}}\left(\widehat {M}\right)=k$
and
${\mbox{\Dind}}\left(\widehat{M}\right)=t$, $t\geq k$,
it follows from  $MXM^{t} =M^{t}$, $ X MX=X$ and $ MX=XM$ that
$X=M^D$.
So,
 we get (\ref{Th3.1-eq1}), (\ref{Th3.1-eq2}), (\ref{Th3.1-eq3}) and (\ref{Th3.1-eq4}).
\end{proof}

\begin{theorem}
\label{Th3.2}
Let $\widehat M=M+\varepsilon M_0\in\mathbb{D}^{n\times n}$ with
${ {\Dind}}\left(\widehat{M}\right)=t$.
Then  the solution to (\ref{GDGI-def}) is unique and
\begin{align}
\label{GDDI -def-Solution-1}
\widehat X=M^D+\varepsilon S
\end{align}
where
\begin{align}
\nonumber
S&= {\left(M^D\right)}^2
\left(\overset{t-1}{\underset{i=0}\sum}{\left(M^D\right)}^iM_0M^i\right)
\left(I_n-MM^D\right)
\\
\label{20231016-Eq-8}
&\qquad\qquad
+\left(I_n-MM^D\right)
\left(\overset{t-1}{\underset{i=0}\sum}M^iM_0{\left(M^D\right)}^i\right)
{\left(M^D\right)}^2
 -M^DM_0M^D.
\end{align}
\end{theorem}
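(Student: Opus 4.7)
The plan is to prove uniqueness first via the characterization of Theorem~\ref{20220312-Th-1}, and then to verify that the explicit formula (\ref{20231016-Eq-8}) satisfies the three conditions (\ref{Th3.1-eq2})--(\ref{Th3.1-eq4}). Throughout I would work in the core-nilpotent coordinates of (\ref{R-C-N-Decomp}): $M = P\,\mathrm{diag}(C, N)\,P^{-1}$ with $C$ invertible, $N^k = O$ for $k = \Aind(\widehat M) \le t$, and $M_0 = P\bigl[\begin{smallmatrix} A & B \\ D & E \end{smallmatrix}\bigr] P^{-1}$.

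\emph{Uniqueness.} By Theorem~\ref{20220312-Th-1}, every solution has standard part $M^D$, so only uniqueness of the dual part is at issue. If $\widehat X_i = M^D + \varepsilon S_i$ ($i = 1, 2$) are two solutions, the difference $T := S_1 - S_2$ satisfies the homogeneous system $T = M^D M T + T M M^D$ and $M T = T M$. Writing $T$ block-wise in the above coordinates and using $M^D M = M M^D = P\,\mathrm{diag}(I, O)\,P^{-1}$, the first identity immediately forces $T_{11} = O$ and $T_{22} = O$. The second identity then reduces to $C T_{12} = T_{12} N$ and $N T_{21} = T_{21} C$; iterating and applying $N^t = O$ yields $T_{12} = C^{-t} T_{12} N^t = O$ and likewise $T_{21} = O$, so $T = O$.

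\emph{Existence.} The given $S$ takes the block form
\[
S = P\begin{bmatrix} -C^{-1} A C^{-1} & \sum_{i=0}^{t-1} C^{-i-2} B N^i \\ \sum_{i=0}^{t-1} N^i D C^{-i-2} & O \end{bmatrix} P^{-1}.
\]
Identity (\ref{Th3.1-eq3}) follows block-by-block from $(M M^D)^2 = M M^D$ and $M^D M M^D = M^D$. Identity (\ref{Th3.1-eq4}) reduces in each off-diagonal block to a telescoping sum whose endpoint term carries the factor $N^t = O$ and thus vanishes, leaving exactly $M^D M_0 - M_0 M^D$.

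The main obstacle is (\ref{Th3.1-eq2}): its $(1,1)$, $(1,2)$ and $(2,1)$ blocks match by direct computation, but the $(2,2)$ block of $K$, namely $\sum_{i=1}^{t} N^{t-i} E N^{i-1}$, must vanish for the identity to hold. I would obtain this from the dual-index hypothesis: $\Dind(\widehat M) = t$ means $\Ark(\widehat M^t) = \Drk(\widehat M^t)$, that is, $\rk\bigl[\begin{smallmatrix} K & M^t \\ M^t & O \end{smallmatrix}\bigr] = 2\,\rk(M^t)$. In core-nilpotent coordinates the two invertible $C^t$ blocks in $M^t$ allow elementary row and column operations that eliminate the $(1,1)$, $(1,2)$ and $(2,1)$ blocks of $K$, so this rank equals $2r + \rk(K_{22})$. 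The hypothesis therefore forces $K_{22} = (I - M M^D) K (I - M M^D) = O$, which is precisely the missing condition and completes the verification.
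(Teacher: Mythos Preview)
Your argument is correct and follows essentially the paper's route: both work in the core-nilpotent block coordinates (\ref{R-C-N-Decomp}), arrive at the same block expression for $S$ (the paper's (\ref{20231016-Eq-3})), and verify the defining system --- the paper checks (\ref{GDGI-def}) directly while you check the equivalent conditions (\ref{Th3.1-eq2})--(\ref{Th3.1-eq4}), and the paper's uniqueness argument is coordinate-free via the three differenced equations whereas yours is block-based and needs only (\ref{Th3.1-eq3})--(\ref{Th3.1-eq4}). You are in fact more explicit than the paper on one point: the paper simply writes the $(2,2)$ block of $K$ as $O$ in (\ref{20231016-Eq-2}) without comment, while you correctly derive this from the dual-index hypothesis $\Drk(\widehat M^{t})=\Ark(\widehat M^{t})$ via the rank computation.
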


\begin{proof}
Let $\widehat{M}=M+\varepsilon M_0\in\mathbb{D}^{n\times n}$,
${\mbox{\Aind}}\left(\widehat {M}\right)=k$,
${\mbox{\Dind}}\left(\widehat{M}\right)=t$,
${\rm rank}\left( M^k \right)=r$,
 the core-nilpotent decomposition of $M\in\mathbb{R}^{n\times n}$  be given as in (\ref{R-C-N-Decomp})
and  $\widehat X=M^D+\varepsilon S$ be the solution to (\ref{GDGI-def}).
And let $P^{-1}M_0 P$ and $P^{-1} S P$ be partitioned as
\begin{align}
\label{20231016-Eq-1}
P^{-1}M_0 P=\begin{bmatrix}
          M_1&M_2  \\
          M_3&M_4  \end{bmatrix},\
         P^{-1} S P=
          \begin{bmatrix}
          S_1&S_2  \\
          S_3&S_4
          \end{bmatrix},
\end{align}
where $P$ is given as in (\ref{R-C-N-Decomp}),
 $M_1\in\mathbb{R}^{r\times r}$ and $S_1\in\mathbb{R}^{r\times r}$.
 Applying (\ref{R-C-N-Decomp}) and (\ref{20231016-Eq-1}),
 it is easy to check that
  $\widehat{M}^t=M^t+\varepsilon K$, in which
\begin{align}
\label{20231016-Eq-2}
K=\overset{t}{\underset{i=1}\sum}M^{t-i}M_0M^{i-1}=P\begin{bmatrix}
\overset{t}{\underset{i=1}\sum}C^{t-i}M_1C^{i-1}&
\overset{t-1}{\underset{i=0}\sum}C^{t-1-i}M_2N^i \\
\overset{t-1}{\underset{i=0}\sum}N^iM_3C^{t-1-i}&O
\end{bmatrix}P^{-1}.
\end{align}

Denote
\begin{align}
\label{20231016-Eq-3}
\widehat G=P \begin{bmatrix}
C^{-1}&O  \\
     O&O   \end{bmatrix} P^{-1}+\varepsilon
P \begin{bmatrix}
     -C^{-1}M_1C^{-1}    & \overset{t-1}{\underset{i=0}\sum}C^{-i-2}M_2N^i \\
     \overset{t-1}{\underset{i=0}\sum}N^iM_3C^{-i-2} &    O
      \end{bmatrix} P^{-1}.
\end{align}

Applying (\ref{R-C-N-Decomp}),  (\ref{20231016-Eq-1}),  (\ref{20231016-Eq-2}) and
  (\ref{20231016-Eq-3}),
we get
\begin{align}
\nonumber
\widehat M\widehat G
&=P\begin{bmatrix}
     I_r&O  \\
     O&O\end{bmatrix}P^{-1}
+\varepsilon P
 \begin{bmatrix}
     O&\overset{t-1}{\underset{i=0}\sum}C^{-i-1}M_2N^i    \\
     \overset{t-1}{\underset{i=0}\sum}N^i M_3C^{-i-1}&O
     \end{bmatrix}
     P^{-1},
\\
\nonumber
\widehat G\widehat M
&=P\begin{bmatrix}
     I_r&O  \\
     O&O\end{bmatrix}P^{-1}
+\varepsilon P \begin{bmatrix}
     O                                 &\overset{t-1}{\underset{i=0}\sum}C^{-i-1}M_2N^i    \\
     \overset{t-1}{\underset{i=0}\sum}N^i M_3C^{-i-1}   &O                              \end{bmatrix} P^{-1},
\\
\nonumber
\widehat M\widehat G \widehat M^{t}
&=\left(P\begin{bmatrix}
     I_r&O  \\
     O&O\end{bmatrix}P^{-1}
+\varepsilon P \begin{bmatrix}
     O&\overset{t-1}{\underset{i=0}\sum}C^{-i-1}M_2N^i    \\
     \overset{t-1}{\underset{i=0}\sum}N^i M_3C^{-i-1}&O                     \end{bmatrix} P^{-1}\right)
\\
\nonumber
&
\times\left(P\begin{array}{c}\begin{bmatrix}
         C^{t}&O  \\
              O&O         \end{bmatrix}\end{array}P^{-1}
+\varepsilon P\begin{array}{c}\begin{bmatrix}
     \overset{t}{\underset{i=1}\sum}C^{t-i}M_1C^{i-1}   &     \overset{t-1}{\underset{i=0}\sum}C^{t-1-i}M_2N^i \\
\overset{t-1}{\underset{i=0}\sum}N^iM_3C^{t-1-i}&      O                 \end{bmatrix}\end{array}P^{-1}\right)
\\
\nonumber
&=P\begin{bmatrix}
        C^{t}&O  \\
        O&O   \end{bmatrix}P^{-1}
+\varepsilon P\begin{bmatrix}
\overset{t}{\underset{i=1}\sum}C^{t-i}M_1C^{i-1}&
\overset{t-1}{\underset{i=0}\sum}C^{t-1-i}M_2N^i    \\
\overset{t-1}{\underset{i=0}\sum}N^iM_3C^{t-1-i}&O                           \end{bmatrix}P^{-1}
\\
\nonumber
&=\widehat M^{t},
\end{align}
\begin{align}
 \nonumber
\widehat G\widehat M\widehat G
&
=\left(\widehat G\widehat M\right) \widehat G
\\
&
=P \begin{bmatrix}
C^{-1}&O  \\
     O&O   \end{bmatrix} P^{-1}+\varepsilon
P \begin{bmatrix}
     -C^{-1}A_1C^{-1}               &   \overset{t-1}{\underset{i=0}\sum}C^{-i-2}M_2N^i \\
     \overset{t-1}{\underset{i=0}\sum}N^iM_3C^{-i-2} &    O               \end{bmatrix} P^{-1}
\nonumber
 =\widehat G.
\end{align}
It follows 
  that  $ \widehat G $  is  the solution to (\ref{GDGI-def}).
Furthermore,
since
\begin{align*}
 P\begin{array}{c}\begin{bmatrix}
     -C^{-1}M_1C^{-1}  &   O \\
     O &    O               \end{bmatrix}\end{array}P^{-1}
&=M^DM_0M^D ,
\\
P\begin{array}{c}\begin{bmatrix}
     O               &   \overset{t-1}{\underset{i=0}\sum}C^{-i-2}M_2N^i \\
     O &    O               \end{bmatrix}\end{array}P^{-1}
&={\left(M^D\right)}^2
\left(\overset{t-1}{\underset{i=0}\sum}{\left(M^D\right)}^iM_0M^i\right)
\left(I_n-MM^D\right) ,
\\
P\begin{array}{c}\begin{bmatrix}
     O               &   O \\
     \overset{t-1}{\underset{i=0}\sum}N^iM_3C^{-i-2} &    O               \end{bmatrix}\end{array}P^{-1}
&=\left(I_n-MM^D\right)
\left(\overset{t-1}{\underset{i=0}\sum}M^iM_0{\left(M^D\right)}^i\right)
{\left(M^D\right)}^2,
\end{align*}
then we get (\ref{GDDI -def-Solution-1}) and   (\ref{20231016-Eq-8}).

Next, we prove the uniqueness of  $ \widehat G $.

Let
 $\widehat{X}_{1}=M^{D}+\varepsilon S_{1}$
and
$\widehat{X}_{2}=M^{D}+\varepsilon S_{2}$
be two solutions to (\ref{GDGI-def}).
Substituting  $\widehat{X}_{1}$ and  $\widehat{X}_{2}$ into
(\ref{Th3.1-eq2}), (\ref{Th3.1-eq3}) and (\ref{Th3.1-eq4})
gives
\begin{align*}
\begin{array}{ll}
K=M M^{D} K+M S_{1} M^{t}+M_{0} M^{D} M^{t}, & \ K =M M^{D} K+M S_{2} M^{t}+M_{0} M^{D} M^{t},
\\
S_{1} =M^{D} M S_{1}+M^{D} M_{0} M^{D}+S_{1} M M^{D}, & \ S_{2} =M^{D} M S_{2}+M^{D} M_{0} M^{D}+S_{2} M M^{D},
\\
M S_{1}+M_{0} M^{D}=S_{1} M+M^{D} M_{0}, & \ M S_{2}+M_{0} M^{D}=S_{2} M+M^{D} M_{0}.
\end{array}
\end{align*}
Then we get
\begin{align}
\label{Th3.3-eq1}
M\left(S_{1}-S_{2}\right) M^{t}&=O,
\\
\label{Th3.3-eq2}
S_{1}-S_{2}&=M^{D} M\left(S_{1}-S_{2}\right)+\left(S_{1}-S_{2}\right) M M^{D},
\\
\label{Th3.3-eq3}
M\left(S_{1}-S_{2}\right)&=\left(S_{1}-S_{2}\right) M.
\end{align}
Applying   (\ref{Th3.3-eq3})  to  (\ref{Th3.3-eq1}), we get that
$ \left(S_{1}-S_{2}\right) M^{t+1}=O$
  and
$ M^{t+1}\left(S_{1}-S_{2}\right)=O$.
It follows from  $MM^D=M^DM=\left(MM^D\right)^{t+1}$
that
$\left(S_{1}-S_{2}\right) M M^{D}=M M^{D}\left(S_{1}-S_{2}\right) =O$.
Furthermore, substituting
$M^{D}\left(S_{1}-S_{2}\right)=O$
and
$\left(S_{1}-S_{2}\right) M M^{D}=O$
 into (\ref{Th3.3-eq2})
gives
$S_{1}-S_{2}=O$.
Therefore,  the solution to (\ref{GDGI-def}) is unique.
\end{proof}

\begin{definition}
\label{GDDI-Def}
 Let $\widehat{M} \in \mathbb{D}^{n \times n}$.
 Then the unique solution to  (\ref{GDGI-def}) is called the weak DDI (WDDI) of $\widehat{M}$,
 and denoted as $\widehat{M}_{\rm W}^{D}$.
\end{definition}

The following example indicates that the WDDI of $\widehat M$ always exists,
even if the DDI of $\widehat M$ does not exist.

\begin{example}
\label{Ex-3-1}
Let
$$\widehat M=M+\varepsilon M_0
=
 \begin{bmatrix}
        1&1&0&0  \\
        0&0&1&1   \\
        0&1&0&0   \\
        1&0&0&0      \end{bmatrix}
+\varepsilon \begin{bmatrix}
        1&0&1&0  \\
        0&1&0&1   \\
        1&0&0&1   \\
        0&1&1&0     \end{bmatrix}.$$
Then  ${\Aind} \left(\widehat{M}\right)=k=2 $
and
${\Dind} \left(\widehat{M}\right)=t=4 $ and
\begin{align*}
M^D&=\begin{bmatrix}
        1&1&-1&-1  \\
        -1&-1&2&2   \\
        2&2&-3&-3   \\
        -1&-1&2&2       \end{bmatrix},\
M^DM_0M^D=\begin{bmatrix}
        0&0&0&0  \\
        1&1&0&0   \\
        -1&-1&0&0   \\
        1&1&0&0     \end{bmatrix},
\\
        K_0
 &=\overset{2}{\underset{i=1}\sum}M^{2-i}M_0M^{i-1}
=\begin{bmatrix}
        2&3&1&1  \\
        2&1&2&2   \\
        2&2&0&1   \\
        1&1&2&1      \end{bmatrix}, 
\\
\left(I_4-MM^D\right)&K_0\left(I_4-MM^D\right)
=\begin{bmatrix}
        -1&0&1&1  \\
        1&0&-1&-1   \\
        -1&-1&1&2   \\
        1&1&-1&-2       \end{bmatrix}
\neq O.
\end{align*}
It follows  from Lemma \ref{le-1.1}
   that the DDI of $\widehat{M}$ does not exist.

Applying (\ref{20231016-Eq-8}) gives
 \begin{align*}
S={\left(M^D\right)}^2&
\left(\overset{3}{\underset{i=0}\sum}{\left(M^D\right)}^iM_0M^i\right)
\left(I_4-MM^D\right)
\\
&
+\left(I_4-MM^D\right)
\left(\overset{3}{\underset{i=0}\sum}M^iM_0{\left(M^D\right)}^i\right)
{\left(M^D\right)}^2-M^DM_0M^D
\\
&=\begin{bmatrix}
        -4&-4&6&6  \\
        3&3&-6&-6   \\
        -4&-4&8&8   \\
        4&4&-8&-8       \end{bmatrix}.
\end{align*}
Then
 by applying (\ref{GDDI -def-Solution-1}) we have
$$\widehat{M}_{\rm W}^{D}=M^{D}+\varepsilon S=\begin{bmatrix}
        1&1&-1&-1  \\
        -1&-1&2&2   \\
        2&2&-3&-3   \\
        -1&-1&2&2      \end{bmatrix}
+\varepsilon \begin{bmatrix}
        -4&-4&6&6  \\
        3&3&-6&-6   \\
        -4&-4&8&8   \\
        4&4&-8&-8       \end{bmatrix}.$$
\end{example}

\begin{remark}
\label{Remark-GDDI-1}
Let $\widehat M=M+\varepsilon M_0\in\mathbb{D}^{n\times n}$ with
${ {\Aind}}\left(\widehat {M}\right)=k$,
and
${ {\Dind}}\left(\widehat{M}\right)=t$.
From Lemma \ref{le-1.1},
we see that the DDI of $\widehat{M}$ exists if and only if the DMPI of $\widehat{M}^k$ exists.
Therefore, applying Lemma \ref{le-DGI-CGGI} we get that the DCI of $\widehat{M}^k$ exists,
 that is, the index of $\widehat{M}^k$ is 1.
It follows that  $\Dind \left(\widehat{M}^k\right)=\Aind \left(\widehat{M}^k\right)$.
So the index of $\widehat{M}$ is $k$.
Furthermore, applying (\ref{DDI-Def}) and (\ref{GDGI-def}),
we get that  $\widehat{M}^{D}=\widehat{M}_{\rm W}^{D}$
when the DDI of $\widehat{M}$ exists.
\end{remark}

\begin{remark}
\label{Remark-GDDI-2}
Let $\widehat M=M+\varepsilon M_0\in\mathbb{D}^{n\times n}$ with
${ {\Aind}}\left(\widehat {M}\right)=k$,
and
${ {\Dind}}\left(\widehat{M}\right)=t$.
From
Remark \ref{Remark-GDDI-1},
\cite[Theorem 2.2]{zhong--dual Drazin inverse}
and
\cite[Theorem 2.6. and Theorem 3.3]{Wang and Gao},
  the following conditions are equivalent:
\begin{enumerate}
  \item[{\bf  (1)}]
  The DDI  of  $\widehat M$ exists
  and $\widehat{M}^{D}=\widehat{M}_{\rm W}^{D}$

  \item[{\bf  (2)}]
  The DMPI (or DCI or DGI)   of  ${\widehat M} ^{k}$ exists;

  \item[{\bf  (3)}]
 The index of $\widehat{M}^k$ is 1;

  \item[{\bf  (4)}]
 The rank of $\widehat{M}^k$ is equal to the  appreciable  rank of $\widehat{M}^k$;

  \item[{\bf  (5)}]
 The index of $\widehat{M}$ is equal to the  appreciable  index of $\widehat{M}$;

  \item[{\bf  (6)}]
   $\left(I_n-MM^{D}\right)K_0\left(I_n-MM^{D}\right)=O$,
   in which
  $K_0=\overset{k}{\underset{i=1}\sum}M^{k-i}M_0M^{i-1}$.
\end{enumerate}
\end{remark}

\begin{remark}
\label{Remark-GDDI-3}
   From Theorem \ref{20220312-Th-1},
Theorem \ref{Th3.2}
 and Definition  \ref{GDDI-Def},
we see that any  square dual matrix has its own unique WDDI.
And since $\widehat{M}^{D}=\widehat{M}_{\rm W}^{D}$,
when the DDI of $\widehat{M}$ exists,
then the WDDI is a generalization of DDI.
\end{remark}

The following example illustrates that $\widehat{M}^{D}=\widehat{M}_{\rm W}^{D}$,
when the DDI of $\widehat{M}$ exists.

\begin{example}
\label{Ex-3-2}
Let
$\widehat M=\begin{bmatrix}
        4&8&12&10  \\
        2&8&10&8   \\
         0&-2&-2&0   \\
        -2&-4&-6&-6     \end{bmatrix}
+\varepsilon \begin{bmatrix}
        -4&3&-3&2  \\
        5&4&0&2   \\
        -7&7&1&0   \\
        2&-3&2&1     \end{bmatrix}$.
It is easy to know that ${\mbox{\rm AInd}} \left(\widehat{M}\right)=k=2 $,
\begin{align*}
M^D&=\begin{bmatrix}
        \frac32&-\frac12&1&1  \\
        1&\frac12&\frac32&\frac32   \\
        -\frac12&0&-\frac12&-\frac12   \\
        -\frac12&0&-\frac12&-\frac12 \end{bmatrix},
M^DA_0M^D=\begin{bmatrix}
        \frac{29}2&10&-\frac92&-\frac92  \\
        -\frac{11}2&10&\frac92&\frac92  \\
        4&-4&0&0   \\
        4&-4&0&0        \end{bmatrix},\\
   K_0&=\overset{2}{\underset{i=1}\sum}M^{2-i}M_0M^{i-1}=\begin{bmatrix}
        -54&88&-4&6  \\
        2&148&108&98  \\
        -10&-24&-18&-18   \\
        18&-62&-28&-28       \end{bmatrix},
\end{align*}
         then
\begin{align*}
\left(I_4-MM^D\right)K_0\left(I_4-MM^D\right)=O.
\end{align*}
Hence,   by applying  Lemma \ref{le-1.1}
we can know  that the DDI of $\widehat{M}$  exists,
and
\begin{align*}
S&={\left(M^D\right)}^2
\left(\overset{1}{\underset{i=0}\sum}{\left(M^D\right)}^iM_0M^i\right)
\left(I_4-MM^D\right)
\\
&\qquad\qquad
+\left(I_4-MM^D\right)
\left(\overset{1}{\underset{i=0}\sum}M^iM_0{\left(M^D\right)}^i\right)
{\left(M^D\right)}^2-M^DM_0M^D
\\
&=\begin{bmatrix}
        48&-\frac{55}4&\frac{163}4&\frac{213}4  \\
        \frac{97}4&-\frac{57}2&-2&\frac{17}4   \\
        -17&19&\frac{1}4&-\frac{7}2   \\
        -\frac{27}2&\frac{15}4&-\frac{23}2&-\frac{61}4       \end{bmatrix}.
\end{align*}
Then
\begin{align*}
\widehat{M}^{D}=\begin{bmatrix}
         \frac32&-\frac12&1&1  \\
        1&\frac12&\frac32&\frac32   \\
        -\frac12&0&-\frac12&-\frac12   \\
        -\frac12&0&-\frac12&-\frac12     \end{bmatrix}
+\varepsilon \begin{bmatrix}
        48&-\frac{55}4&\frac{163}4&\frac{213}4  \\
        \frac{97}4&-\frac{57}2&-2&\frac{17}4   \\
        -17&19&\frac{1}4&-\frac{7}2   \\
        -\frac{27}2&\frac{15}4&-\frac{23}2&-\frac{61}4       \end{bmatrix}.
\end{align*}

 By  (\ref{20231016-Eq-8}) we have
\begin{align*}
\widehat{M}_{\rm W}^{D}=\begin{bmatrix}
         \frac32&-\frac12&1&1  \\
        1&\frac12&\frac32&\frac32   \\
        -\frac12&0&-\frac12&-\frac12   \\
        -\frac12&0&-\frac12&-\frac12     \end{bmatrix}
+\varepsilon \begin{bmatrix}
        48&-\frac{55}4&\frac{163}4&\frac{213}4  \\
        \frac{97}4&-\frac{57}2&-2&\frac{17}4   \\
        -17&19&\frac{1}4&-\frac{7}2   \\
        -\frac{27}2&\frac{15}4&-\frac{23}2&-\frac{61}4       \end{bmatrix}=\widehat{M}^{D}.
\end{align*}
\end{example}

\

In Examples \ref{Ex-3-1} and \ref{Ex-3-2},
the method for calculating WDDIs is based on (\ref{GDDI -def-Solution-1})  and (\ref{20231016-Eq-8})  in  Theorem \ref{Th3.2}.
It is worth noting that, in the process of computing,
the standard and dual parts of WDDI are provided separately.
By nature, this is transforming a dual matrix problem into a real matrix problem.

Wang et al. \cite{Wang2023LaaSub}
introduce the decomposition (\ref{dual-C-N}) of a square dual matrix.
In following Theorem \ref{GDDI-Def-2}, by applying the decomposition we get a new characterization of the WDDI.
Here, we do not list the dual and real parts of the square matrix $\widehat M$.

\begin{theorem}
\label{GDDI-Def-2}
Let
$\widehat  M    \in \mathbb{D}^{n\times n}$,
${\mbox{\Dind}}\left(\widehat{M}\right)=t$,
${\Drk}\left( \widehat{M}^t \right)=r$
and
  the  decomposition of $\widehat  M$ be given as in (\ref{dual-C-N}).
Then
\begin{align}
\label{GDDI-Def-Char-2}
\widehat{M}_{\rm W}^{D}
=
\widehat{P}
\begin{bmatrix}
\widehat{C}^{-1} & 0 \\
0 & 0
\end{bmatrix}
 \widehat{P}^{-1},
\end{align}
where
$\widehat{P} \in \mathbb{D}^{n \times n}$
 and
$\widehat{C} \in \mathbb{D}^{r \times r}$ are invertible.
\end{theorem}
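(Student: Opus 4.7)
The plan is to exhibit the right-hand side of (\ref{GDDI-Def-Char-2}) as an explicit solution of the three defining equations (\ref{GDGI-def}) and then invoke the uniqueness part of Theorem \ref{Th3.2} to conclude that it must coincide with $\widehat{M}_{\rm W}^{D}$. This reduces the whole statement to a block-matrix verification in the basis provided by the dual core-nilpotent decomposition (\ref{dual-C-N}).

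More concretely, I would first denote the candidate
\[
\widehat G = \widehat P \begin{bmatrix} \widehat C^{-1} & O \\ O & O \end{bmatrix} \widehat P^{-1},
\]
and use the block-diagonal form $\widehat M = \widehat P\,\mathrm{diag}(\widehat C,\widehat N)\,\widehat P^{-1}$ together with the nilpotency $\widehat N^{t}=O$ from Lemma \ref{dual-C-N} to write
\[
\widehat M^{t} = \widehat P \begin{bmatrix} \widehat C^{t} & O \\ O & O \end{bmatrix} \widehat P^{-1}.
\]
Then I would compute, still in block form,
\[
\widehat M \widehat G = \widehat G \widehat M = \widehat P \begin{bmatrix} I_{r} & O \\ O & O \end{bmatrix} \widehat P^{-1},
\]
which gives both the commutation identity $\widehat M \widehat G = \widehat G \widehat M$ and, multiplying on the right by $\widehat M^{t}$, the identity $\widehat M \widehat G \widehat M^{t} = \widehat M^{t}$ (because the nonzero block of $\widehat M^{t}$ is $\widehat C^{t}$, which is preserved). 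The equation $\widehat G \widehat M \widehat G = \widehat G$ then follows by a further multiplication using $\widehat C^{-1}\cdot I_{r} = \widehat C^{-1}$.

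With $\widehat G$ thus shown to satisfy all of (\ref{GDGI-def}), the uniqueness of the solution established in Theorem \ref{Th3.2} forces $\widehat{M}_{\rm W}^{D} = \widehat G$, which is exactly formula (\ref{GDDI-Def-Char-2}).

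I do not foresee a real obstacle here: the only subtlety is that all manipulations take place in the dual matrix ring, where $\widehat C^{-1}$ exists by the invertibility part of Lemma \ref{dual-C-N} and the block multiplications behave formally as in the real case. The only care required is to use $\widehat N^{t}=O$ (not a lower power) when computing $\widehat M^{t}$, and to rely on the uniqueness theorem rather than trying to derive the formula from the explicit expression (\ref{20231016-Eq-8}), which would force one to re-expand $\widehat P$ and $\widehat C$ into standard and dual parts and unnecessarily reintroduce the real-matrix summations.
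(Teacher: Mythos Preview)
Your proposal is correct and follows essentially the same route as the paper: define the candidate $\widehat G$, use $\widehat N^{t}=O$ to obtain the block form of $\widehat M^{t}$, verify the three equations in (\ref{GDGI-def}) by block multiplication, and then appeal to the uniqueness in Theorem~\ref{Th3.2}. The paper's proof is organized in exactly this way (with the candidate called $\widehat X_0$), so no adjustment is needed.
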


\begin{proof}
Let
$\widehat  M = M + \varepsilon M_0 \in\mathbb{D}^{n\times n}$,
${\mbox{\Dind}}\left(\widehat{M}\right)=t$,
${\Drk}\left( \widehat{M}^t \right)=r$
and
  the  decomposition of $\widehat  M$ be given as in (\ref{dual-C-N}).
Since $\widehat N^{t}=O$,
then
$$
\widehat{M}^t
=
\widehat{P}
\begin{bmatrix}
\widehat{C}^t & O \\
O & \widehat{N}^t
\end{bmatrix}
 \widehat{P}^{-1}
=
\widehat{P}
\begin{bmatrix}
\widehat{C}^t & O \\
O & O
\end{bmatrix}
 \widehat{P}^{-1}.
$$
Furthermore, suppose that
$$\widehat{X}_0
=\widehat{P}\begin{bmatrix}
\widehat{C}^{-1} & O \\
O & O
\end{bmatrix} \widehat{P}^{-1}.$$
Then we get
\begin{align*}
\widehat{M}\widehat{X}_0\widehat{M}^{t}
&
=
\widehat{P}\begin{bmatrix}
\widehat{C} & O \\
O & \widehat{N}
\end{bmatrix}\begin{bmatrix}
\widehat{C}^{-1} & O \\
O & O
\end{bmatrix}\begin{bmatrix}
\widehat{C}^{t} & O \\
O & O
\end{bmatrix}  \widehat{P}^{-1}
=\widehat{P}  \begin{bmatrix}
\widehat{C}^{t} & O \\
O &O
\end{bmatrix} \widehat{P}^{-1}
=\widehat{M}^{t},
\\
\widehat{X}_0 \widehat{M} \widehat{X}_0
&
=
\widehat{P}\begin{bmatrix}
\widehat{C}^{-1} & O \\
O & O
\end{bmatrix}\begin{bmatrix}
\widehat{C} & O \\
O & \widehat{N}
\end{bmatrix}\begin{bmatrix}
\widehat{C}^{-1} & O \\
O & O
\end{bmatrix} \widehat{P}^{-1}=\widehat{P}
\begin{bmatrix}
\widehat{C}^{-1} & O \\
O & O
\end{bmatrix} \widehat{P}^{-1}=\widehat{X}_0 .
\\
 \widehat{M} \widehat{X}_0
 &
 =
 \widehat{P}\begin{bmatrix}
\widehat{C} & O \\
O & \widehat{N}
\end{bmatrix}\begin{bmatrix}
\widehat{C}^{-1} & O \\
O & O
\end{bmatrix} \widehat{P}^{-1}=\widehat{P}\begin{bmatrix}
I_{r} & O \\
O & O
\end{bmatrix} \widehat{P}^{-1},
\\
 \widehat{X}_0 \widehat{M}
 &
 =
 \widehat{P}\begin{bmatrix}
\widehat{C}^{-1} & O \\
O & O
\end{bmatrix}\begin{bmatrix}
\widehat{C} & O \\
O & \widehat{N}
\end{bmatrix} \widehat{P}^{-1}=\widehat{P}\begin{bmatrix}
I_{r} & O \\
O & O
\end{bmatrix} \widehat{P}^{-1}.
\end{align*}
Hence $ \widehat{X}_0$ is the solution to (\ref{GDGI-def}).
Furthermore, by Theorem \ref{Th3.2}
we have
$\widehat{X}_0=\widehat{A}_{\rm W}^{D}$,
that is, (\ref{GDDI-Def-Char-2}).
\end{proof}

 \section{Weak dual group inverse}
Zhong and Zhang \cite{zhong--dual Drazin inverse} consider  the DGI.
Next we consider a generalized DGI.

\begin{definition}
\label{GDGI-def-EQ}
Let $\widehat{M} \in \mathbb{D}^{n \times n}$ with  ${\mbox{\Aind}}\left({\widehat M}\right)=1$.
 Then  the solution to
\begin{align}
\label{GDGI-def-Eq}
\widehat M\widehat X\widehat M^{2}=\widehat M^{2},\
 \widehat X\widehat M\widehat X=\widehat X, \
 \widehat M\widehat X=\widehat X\widehat M
\end{align}
 is unique.
We call the unique solution  the weak DGI (WDGI) of $\widehat{M}$,
and denote it by $\widehat X=\widehat{M}_{\rm W}^{\#}$.
\end{definition}

\begin{theorem}
\label{Th-GDDI-EQ}
Let $\widehat{M}=M+\varepsilon M_{0} \in \mathbb{D}^{n \times n}$
with
 ${ {\Aind}}\left({\widehat M}\right)=1$,
 then
\begin{align}
\label{GDDI-eq}
\widehat{M}_{\rm W}^{\#}=M^{\#}+\varepsilon R,
\end{align}
where
$ R={\left(M^{\#}\right)}^2M_0\left(I_n-MM^{\#}\right)
+\left(I_n-MM^{\#}\right)M_0{\left(M^{\#}\right)}^2-M^{\#}M_0M^{\#}$.
\end{theorem}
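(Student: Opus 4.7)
The plan is to verify directly that $\widehat X = M^\# + \varepsilon R$ satisfies the three defining equations in (\ref{GDGI-def-Eq}) and is the unique such solution. The first step is to separate standard and dual parts. Since $\Aind(\widehat M) = 1$ implies $\ind(M) = 1$, the standard parts of (\ref{GDGI-def-Eq}) read $MXM^2 = M^2$, $XMX = X$, $MX = XM$; the argument from the proof of Theorem \ref{20220312-Th-1} (with exponent $2$ in place of $t$) forces $X = M^\#$.

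Next, I would expand each of the three equations to first order in $\varepsilon$, using $\widehat M^2 = M^2 + \varepsilon(MM_0 + M_0 M)$, and obtain three linear identities in the unknown dual part $R$. I would then substitute the candidate expression for $R$ and verify each identity using the group-inverse relations $MM^\# = M^\# M$, $M^2 M^\# = M = M^\# M^2$, $M(I - MM^\#) = O$, $(I - MM^\#)M = O$, and $M^\# M M^\# = M^\#$. These identities collapse most cross terms and reduce each of the three equations to a tautology.

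For uniqueness, I would mimic the subtraction argument used in the proof of Theorem \ref{Th3.2}. If two candidate dual parts $S_1, S_2$ both satisfy the reduced system, then $D := S_1 - S_2$ obeys $MDM^2 = O$, $MD = DM$, and $D = M^\# M D + D M M^\#$. The first two give $DM^3 = O$, hence $DMM^\# = O$; a symmetric computation gives $MM^\# D = O$; substituting both into the third relation forces $D = O$.

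The main obstacle is really just bookkeeping, because several cross terms arise and one must invoke the correct group-inverse identity at each cancellation. A much shorter alternative route is to observe that (\ref{GDGI-def-Eq}) coincides with the WDDI system (\ref{GDGI-def}) at exponent $t = 2$, so Theorem \ref{Th3.2} applies directly; formula (\ref{20231016-Eq-8}) specialized to $t = 2$ simplifies to the stated $R$ because $M(I-MM^\#) = O$ and $(I-MM^\#)M = O$ annihilate every term of the sums $\sum_{i=0}^{t-1}$ with $i \ge 1$, leaving only the $i = 0$ contributions.
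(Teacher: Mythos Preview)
Your proposal is correct. The paper takes a different, more computational route: it invokes the core--nilpotent decomposition $M = P\begin{bmatrix} C & O \\ O & O\end{bmatrix}P^{-1}$ (the nilpotent block is $O$ since $\ind(M)=1$), writes $M_0$ in compatible block form, defines the candidate $\widehat H$ block-wise, and verifies the three equations of (\ref{GDGI-def-Eq}) by explicit block multiplication before translating $\widehat H$ back into the closed form $M^{\#} + \varepsilon R$. Your first route is coordinate-free, working directly with the group-inverse identities $M(I-MM^{\#})=O$, $M^{\#}MM^{\#}=M^{\#}$, etc., which makes the cancellations conceptually clearer but shifts the burden to careful bookkeeping. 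Your second route---specializing Theorem~\ref{Th3.2} to exponent $2$---is the slickest; just note that Theorem~\ref{Th3.2} is stated for $t=\Dind(\widehat M)$, which can equal $1$ here, so you should remark that its \emph{proof} (or the uniqueness subtraction argument you already sketched) goes through verbatim with exponent $2$ because the nilpotent block is identically zero when $\ind(M)=1$. Either of your arguments yields a shorter derivation than the paper's block computation, at the price of leaning on the earlier WDDI machinery rather than giving a self-contained verification.
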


\begin{proof}
Let $\widehat{M}=M+\varepsilon M_{0} \in \mathbb{D}^{n \times n}$,
with   ${ {\Aind}}\left({\widehat M}\right)=1$ and $\rk(M)=r$.
Since ${ {\Aind}}\left({\widehat M}\right)=1$,
then ${ {\ind}}\left({ M}\right)=1$.
By applying Theorem \ref{C-N-Decomposion-Th},
we get
\begin{align}
\label{20231016-Eq-9}
 M=P\begin{bmatrix}
        C&O  \\
        O&O         \end{bmatrix}P^{-1},
        \end{align}
where $P$ and $C$ are nonsingular.
Write
\begin{align}
\label{20231016-Eq-10}
M_0=P\begin{bmatrix}
          M_1&M_2  \\
          M_3&M_4  \end{bmatrix}P^{-1},
        \end{align}
where $  M_1\in \mathbb{C}^{r \times r}$.
Further, denote
\begin{align}
\label{20231016-Eq-11}
\widehat H=P\begin{array}{c}\begin{bmatrix}
C^{-1}&O  \\
     O&O   \end{bmatrix}\end{array}P^{-1}+\varepsilon
P\begin{array}{c}\begin{bmatrix}
     -C^{-1}M_1C^{-1}    & C^{-2}M_2 \\
     M_3C^{-2} &    O               \end{bmatrix}\end{array}P^{-1}.
        \end{align}

By applying (\ref{20231016-Eq-9}), (\ref{20231016-Eq-10}) and (\ref{20231016-Eq-11}),
we have
\begin{align*}
\widehat M\widehat H
&=P\begin{bmatrix}
     I_r&O  \\
     O&O\end{bmatrix}P^{-1}
+\varepsilon P\begin{array}{c}\begin{bmatrix}
     O&C^{-1}M_2    \\
      M_3C^{-1}&O                     \end{bmatrix}\end{array}P^{-1}=
\widehat H\widehat M ,
\\
\widehat M\widehat H \widehat M^{2}
&=P\begin{bmatrix}
        C^{2}&O  \\
        O&O   \end{bmatrix}P^{-1}
+\varepsilon P\begin{bmatrix}
CM_1+M_1C&  CM_2    \\
M_3C&O                           \end{bmatrix}P^{-1}
=\widehat M^{2},
\\
\widehat H\widehat M\widehat H
&
=P\begin{array}{c}\begin{bmatrix}
C^{-1}&O  \\
     O&O   \end{bmatrix}\end{array}P^{-1}+\varepsilon
P\begin{array}{c}\begin{bmatrix}
     -C^{-1}M_1C^{-1}    & C^{-2}M_2 \\
     M_3C^{-2} &    O               \end{bmatrix}\end{array}P^{-1}
=\widehat H.
\end{align*}
Therefore,
from Definition \ref{GDGI-def-EQ},
we get that
$\widehat M_{\rm W}^{\#}$ exists
 and
$\widehat M_{\rm W}^{\#}=\widehat H$.
Furthermore, from (\ref{20231016-Eq-11}) and
\begin{align}
M^{\#}M_0M^{\#}
&
=
 P\begin{array}{c}\begin{bmatrix}
     -C^{-1}M_1C^{-1}  &   O \\
     O &    O               \end{bmatrix}\end{array}P^{-1},
\\
{\left(M^{\#}\right)}^2
M_0
\left(I_n-MM^{\#}\right)
&
=
P\begin{array}{c}\begin{bmatrix}
     O               &   C^{-2}M_2 \\
     O &    O               \end{bmatrix}\end{array}P^{-1},
\\
\left(I_n-MM^{\#}\right)
M_0
{\left(M^{\#}\right)}^2
&
=
P\begin{array}{c}\begin{bmatrix}
     O               &   O \\
    M_3C^{-2} &    O               \end{bmatrix}\end{array}P^{-1},
\end{align}
it follows  that we get (\ref{GDDI-eq}).
\end{proof}

From  (\ref{GDGI-def}), Definition \ref{GDDI-Def},  (\ref{GDGI-def-Eq})  and Definition \ref{GDGI-def-EQ},
it easy to check that the WDGI is a special case of the WDDI.
The difference is that in Definition \ref{GDGI-def-EQ}, we apply the appreciable index rather than the index of the dual matrix.
Next, an example is presented to show that the WDGI of $\widehat M$ always exists,
in which $\widehat M\in \mathbb{D}^{n \times n}$ and ${\mbox{\rm AInd}} \left(\widehat{M}\right)=1$.

\begin{example}
Let $\widehat M=M+\varepsilon M_0=\begin{bmatrix}
        1&0  \\
        0&0        \end{bmatrix}
+\varepsilon \begin{bmatrix}
        0&0  \\
        1&1        \end{bmatrix}$,
 then ${\mbox{\rm AInd}} \left(\widehat{M}\right)=1 $.
It is obvious that
$$M^{\#}=\begin{bmatrix}
        1&0  \\
        0&0        \end{bmatrix},
$$
and
$$
\left(I_2-MM^{\#}\right)M_0\left(I_2-MM^{\#}\right)=
\begin{bmatrix}
        0&0  \\
        0&1        \end{bmatrix}
\begin{bmatrix}
        0&0  \\
        1&1        \end{bmatrix}
\begin{bmatrix}
        0&0  \\
        0&1        \end{bmatrix}
=\begin{bmatrix}
        0&0  \\
        0&1        \end{bmatrix}
\neq O.
$$
Hence, from Lemma \ref{le-DGI-CGGI}
we see that the DGI of $\widehat{M}$ is non-existent,
but  the WDGI of $\widehat{M}$ is existent.
From Theorem \ref{Th-GDDI-EQ}, we have
$$\begin{aligned}
R&={\left(M^{\#}\right)}^2M_0\left(I_2-MM^{\#}\right)
+\left(I_2-MM^{\#}\right)M_0{\left(M^{\#}\right)}^2-M^{\#}M_0M^{\#}
 =\begin{bmatrix}
        0&0  \\
        1&0        \end{bmatrix}.
\end{aligned}$$
It follows that
$$\widehat{M}_{\rm W}^{\#}=M^{\#}+\varepsilon R
=\begin{bmatrix}
        1&0  \\
        0&0        \end{bmatrix}
+\varepsilon \begin{bmatrix}
        0&0  \\
        1&0        \end{bmatrix}.$$
\end{example}

\begin{remark}
\label{Remark-4-2}
From (\ref{DCI-Def}),
Lemma \ref{le-DGI-CGGI},
Definition  \ref{GDGI-def-EQ}
 and Theorem \ref{Th-GDDI-EQ},
it is  easy to verify that   $\widehat{M}^{\#}=\widehat{M}_{\rm W}^{\#}$,
when the DGI of $\widehat{M}$ exists.
\end{remark}

\begin{example}
\label{Ex-4-3}
Let $\widehat M=M+\varepsilon M_0=\begin{bmatrix}
        1&0  \\
        0&0        \end{bmatrix}
+\varepsilon \begin{bmatrix}
        0&1  \\
        1&0        \end{bmatrix}$,
then we have $\ind (M)=1$,
$$ M^{\#}=\begin{bmatrix}
        1&0  \\
        0&0        \end{bmatrix}
$$
and
$$
\left(I_2-MM^{\#}\right)M_0\left(I_2-MM^{\#}\right)=
\begin{bmatrix}
        0&0  \\
        0&1        \end{bmatrix}
\begin{bmatrix}
        0&1  \\
        1&0        \end{bmatrix}
\begin{bmatrix}
        0&0  \\
        0&1        \end{bmatrix}
=O.
$$
Hence, from Lemma \ref{le-DGI-CGGI}
we can know  that DGI of $\widehat{M}$ exists and
\begin{align*}
\widehat{M}^{\#}
&=M^{\#}+\varepsilon
\left({\left(M^{\#}\right)}^2M_0\left(I_2-MM^{\#}\right)
+\left(I_2-MM^{\#}\right)M_0{\left(M^{\#}\right)}^2-M^{\#}M_0M^{\#}\right)
\\
&=\begin{bmatrix}
        1&0  \\
        0&0        \end{bmatrix}
+\varepsilon \begin{bmatrix}
        0&1  \\
        1&0        \end{bmatrix}.
\end{align*}
Applying Theorem \ref{Th-GDDI-EQ}  we have
$$
R={\left(M^{\#}\right)}^2M_0\left(I_2-MM^{\#}\right)
+\left(I_2-MM^{\#}\right)M_0{\left(M^{\#}\right)}^2-M^{\#}M_0M^{\#}
 =\begin{bmatrix}
        0&1  \\
        1&0        \end{bmatrix}.$$
Therefore,
$$\widehat{M}_{\rm W}^{\#}=M^{\#}+\varepsilon R
=\begin{bmatrix}
        1&0  \\
        0&0        \end{bmatrix}
+\varepsilon \begin{bmatrix}
        0&1  \\
        1&0        \end{bmatrix}
=\widehat{M}^{\#}.
$$
\end{example}

 \

From Lemma \ref{dual-C-N},
we know that when $\Aind\left(\widehat{M}\right)=1$,
then the   standard part of $\widehat{N}$ is $O$.
Denote  $\widehat{N}=\varepsilon N$, in which $N\in \mathbb{R}^{(n-r) \times (n-r)}$.
Therefore,
when
$\widehat  M \in\mathbb{D}^{n\times n}$
with
$\Aind\left(\widehat{M}\right)=1$ and
${\Ark}\left( \widehat{M}  \right)=r$,
  there exists an invertible dual matrix
$\widehat  P  \in\mathbb{D}^{n\times n}$
such that $\widehat  M$  has the following form:
\begin{align}
\label{C-N-D-Ind=1}
\widehat M =
\widehat P
\begin{bmatrix}
 \widehat C &       O     \\
        O   & \varepsilon N
\end{bmatrix}\widehat P^{-1},
\end{align}
where  $\widehat{C} \in \mathbb{D}^{r \times r}$ is  invertible
and
$N\in \mathbb{R}^{(n-r) \times (n-r)}$.
It follows that from Definition \ref{GDGI-def-Eq} that
\begin{align}
\label{C-N-D-Ind=1-WDGI}
\widehat M_{\rm W}^{\#} =
\widehat P
\begin{bmatrix}
 \widehat C ^{-1} &       O     \\
        O        &    O
\end{bmatrix}\widehat P^{-1}.
\end{align}

Next, we consider the dual matrix equation
\begin{align}
\label{DL-eQ-G-1}
\widehat{M}\widehat{x}=\widehat{b},
\end{align}
in which  $\widehat M \in\mathbb{D}^{n\times n}$,
${\Ark}\left( \widehat{M}  \right)=r$,
${ {\Aind}}\left(\widehat {M}\right)=1$ and
$\widehat b \in\mathbb{D}^{n\times 1}$.

\begin{theorem}
\label{App-G-1}
Let $\widehat M\in\mathbb{D}^{n\times n}$, 
${ {\Aind}}\left(\widehat {M}\right)=1$, $\Ark\left(\widehat {M}\right)=r$ and
$\widehat b\in\mathbb{D}^{n\times 1}$. 
Then (\ref{DL-eQ-G-1}) is consistent if and only if
 \begin{numcases}{}
\label{App-G-1-E-1}
\varepsilon\left(I_n-\widehat{M}_{\rm W}^{\#}\widehat{M}\right)\widehat{b} =O,
\\
\label{App-G-1-E-2}
 \left(I_n-\widehat{M}_{\rm W}^{\#}\widehat{M}\right)\widehat{b}
\in \mathcal{R}\left(\widehat{M}-\left(\widehat{M}_{\rm W}^{\#}\right)^{\#}\right).
\end{numcases}

Furthermore, let the decomposition of $\widehat{M}$ be as in (\ref{C-N-D-Ind=1})
and
  $\widehat P^{-1}\widehat{b}$ be partitioned as
\begin{align}
\label{App-G-1-E-1-Proof-4}
\widehat P^{-1}\widehat{b}
=
\begin{bmatrix}
 \widehat{b}_1    \\
 \widehat{b}_2
\end{bmatrix}.
\end{align}
Then
the general solution to (\ref{DL-eQ-G-1}) is
 \begin{align}
\label{App-G-1-E-5}
 \widehat{x}
=
 \widehat P
 \begin{bmatrix}
 \widehat{C}^{-1} \widehat{b}_1   \\
 N^{\dag}\widehat{b}_2+ \left(I_{n-r}-N^{\dag}N\right)y_2+ \varepsilon{z}_2
\end{bmatrix},
\end{align}
where $y_2 \in\mathbb{R}^{(n-r)\times 1}$ and  $z_2 \in\mathbb{R}^{(n-r)\times 1}$ are arbitrary.
\end{theorem}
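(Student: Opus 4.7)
The plan is to exploit the canonical decomposition (\ref{C-N-D-Ind=1}) and the explicit WDGI formula (\ref{C-N-D-Ind=1-WDGI}) to reduce (\ref{DL-eQ-G-1}) to a pair of uncoupled block equations. Setting $\widehat{y} = \widehat{P}^{-1}\widehat{x}$ and partitioning conformably with the $(r,n-r)$ structure into $\widehat{y}_1\in\mathbb{D}^{r\times 1}$ and $\widehat{y}_2\in\mathbb{D}^{(n-r)\times 1}$, I would multiply (\ref{DL-eQ-G-1}) by $\widehat{P}^{-1}$ on the left. With (\ref{App-G-1-E-1-Proof-4}), the system separates into
\begin{equation*}
\widehat{C}\widehat{y}_1 = \widehat{b}_1, \qquad \varepsilon N \widehat{y}_2 = \widehat{b}_2.
\end{equation*}
The invertibility of $\widehat{C}$ immediately yields $\widehat{y}_1 = \widehat{C}^{-1}\widehat{b}_1$, which is exactly the first entry of (\ref{App-G-1-E-5}).

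For the second block, I write $\widehat{b}_2 = b_2 + \varepsilon b_{20}$ and $\widehat{y}_2 = y_2 + \varepsilon z_2$ with real components. Because $\varepsilon N$ is purely dual, $\varepsilon N\widehat{y}_2 = \varepsilon N y_2$, so matching standard and dual parts gives the intrinsic scalar conditions $b_2 = 0$ and $Ny_2 = b_{20}$. Translating these back to the coordinate-free form in the theorem, a direct computation from (\ref{C-N-D-Ind=1-WDGI}) gives $(I_n-\widehat{M}_{\rm W}^{\#}\widehat{M})\widehat{b} = \widehat{P}\begin{bmatrix}O\\\widehat{b}_2\end{bmatrix}$; multiplying by $\varepsilon$ and using invertibility of $\widehat{P}$ shows that (\ref{App-G-1-E-1}) is equivalent to $b_2 = 0$. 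I would then identify $(\widehat{M}_{\rm W}^{\#})^{\#}$ from the canonical form and obtain $\widehat{M}-(\widehat{M}_{\rm W}^{\#})^{\#} = \widehat{P}\,\mathrm{diag}(O,\varepsilon N)\,\widehat{P}^{-1}$; combined with $b_2 = 0$, the range containment (\ref{App-G-1-E-2}) is then equivalent to $b_{20}\in\mathcal{R}(N)$.

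Once consistency is secured, the general real solution of $Ny_2 = b_{20}$ is $y_2 = N^{\dag}b_{20}+(I_{n-r}-N^{\dag}N)y_2'$ with $y_2'$ arbitrary, while $z_2$ remains unconstrained because the $\varepsilon$ factor annihilates the dual component of $\widehat{y}_2$. Assembling these pieces and transforming back through $\widehat{x} = \widehat{P}\widehat{y}$ produces (\ref{App-G-1-E-5}). The main obstacle, I expect, is the careful identification of $(\widehat{M}_{\rm W}^{\#})^{\#}$ from (\ref{C-N-D-Ind=1}) and the verification that its range in the canonical basis captures exactly the admissible right-hand sides; this is really bookkeeping, but it is where the coordinate-free form (\ref{App-G-1-E-2}) must be reconciled with the intrinsic condition $b_{20}\in\mathcal{R}(N)$.
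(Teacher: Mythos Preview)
Your plan is essentially the paper's own proof: both multiply by $\widehat P^{-1}$ to pass to the block form, reduce (\ref{DL-eQ-G-1}) to the uncoupled equations $\widehat C\,\widehat y_1=\widehat b_1$ and $\varepsilon N\,\widehat y_2=\widehat b_2$, and then identify the solvability of the second block with (\ref{App-G-1-E-1})--(\ref{App-G-1-E-2}) via the explicit block forms of $I_n-\widehat M_{\rm W}^{\#}\widehat M$ and $\widehat M-(\widehat M_{\rm W}^{\#})^{\#}$. One remark worth flagging: your derivation (correctly) places $N^{\dag}b_{20}$ in the \emph{standard} part of $\widehat y_2$, whereas the displayed (\ref{App-G-1-E-5}) has $N^{\dag}\widehat b_2$, which in the consistent case $\widehat b_2=\varepsilon b_{20}$ is purely infinitesimal---so what you actually obtain is the right parametrisation rather than the printed one verbatim.
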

\begin{proof}
Let $\widehat M \in\mathbb{D}^{n\times n}$,
${ {\Aind}}\left(\widehat {M}\right)=1$,  $\Ark\left(\widehat {M}\right)=r$
and the decomposition of $\widehat {M}$ be as in (\ref{C-N-D-Ind=1}).
It is easy to check that
 \begin{numcases}{}
\label{App-G-1-E-1-Proof-3}
\begin{aligned} 
  I_n-\widehat{M}_{\rm W}^{\#}\widehat{M}
  &
  =
  \widehat P
\begin{bmatrix}
O   &    O     \\
 O  & I_{n-r}
\end{bmatrix}\widehat P^{-1}, 
\\
 \left(\widehat{M}_{\rm W}^{\#}\right)^{\#}
  &
 =\widehat P
\begin{bmatrix}
\widehat{C} &       O     \\
 O   & O
\end{bmatrix}\widehat P^{-1}
\\
 \widehat{M}-\left(\widehat{M}_{\rm W}^{\#}\right)^{\#}
  &
 =\widehat P
\begin{bmatrix}
O &       O     \\
 O   & \varepsilon N
\end{bmatrix}\widehat P^{-1}.
\end{aligned} \end{numcases}
Furthermore,
let $\widehat P^{-1}\widehat{x}$ be partitioned as
\begin{align}
\label{App-G-1-E-1-Proof-0}
\widehat P^{-1}\widehat{x}= \begin{bmatrix}
 \widehat{x}_1    \\
 \widehat{x}_2
\end{bmatrix}.
\end{align}
It follows from   (\ref{App-G-1-E-1-Proof-4}) and (\ref{App-G-1-E-1-Proof-0})  that
\begin{align}
\nonumber
\widehat M \widehat{x}- \widehat{b}
&=
\widehat P
\left(\begin{bmatrix}
 \widehat C &       O     \\
        O   & \varepsilon N
\end{bmatrix}\widehat P^{-1}\widehat{x}-\widehat P^{-1}\widehat{b}\right)
\\
\label{App-G-1-E-1-Proof-1}
& =
\widehat P
\left(\begin{bmatrix}
 \widehat C &       O     \\
        O   & \varepsilon N
\end{bmatrix} \begin{bmatrix}
 \widehat{x}_1    \\
 \widehat{x}_2
\end{bmatrix}-\begin{bmatrix}
 \widehat{b}_1    \\
 \widehat{b}_2
\end{bmatrix}\right)
=
\widehat P
\left(  \begin{bmatrix}
 \widehat C \widehat{x}_1- \widehat{b}_1   \\
 \varepsilon N\widehat{x}_2- \widehat{b}_2
\end{bmatrix}\right).
\end{align}
Therefore,
$\widehat M \widehat{x}=\widehat{b}$ is consistent if and only if
there exist $\widehat{x}_1$ and $\widehat{x}_2$ satisfying
 $\widehat C \widehat{x}_1= \widehat{b}_1 $ and
 $\varepsilon N\widehat{x}_2= \widehat{b}_2$.

Since  $\widehat C $ is invertible,
 then it is obvious that $\widehat C \widehat{x}_1= \widehat{b}_1 $ is consistent.
%
%

Since
 $\varepsilon N\widehat{x}_2= \widehat{b}_2$,
 then the standard part of  $\widehat{b}_2$ is equal to $O$,
 that is,
 $\varepsilon\widehat{b}_2=O$.
Furthermore,
by applying  (\ref{App-G-1-E-1-Proof-4}),   (\ref{App-G-1-E-1-Proof-3}) and (\ref{App-G-1-E-1-Proof-0}),
we have
$\varepsilon\left(I_n-\widehat{M}_{\rm W}^{\#}\widehat{M}\right)\widehat{b}=O$,
that is,
 (\ref{App-G-1-E-1}).
Besides, when $\varepsilon\widehat{b}_2=O$,
there exists   $\widehat{x}_2$ satisfying
 $\varepsilon N\widehat{x}_2= \widehat{b}_2$
 if and only if
 $ \left(I_n-\widehat{M}_{\rm W}^{\#}\widehat{M}\right)\widehat{b}
\in
\mathcal{R}\left(\widehat{M}-\left(\widehat{M}_{\rm W}^{\#}\right)^{\#}\right)$,
that is (\ref{App-G-1-E-2}).

Let (\ref{DL-eQ-G-1}) be consistent,
then from (\ref{App-G-1-E-1-Proof-4}) and  (\ref{App-G-1-E-1-Proof-1})
it is obvious that
 $\widehat{x}_1=\widehat{C}^{-1} \widehat{b}_1 $
 and
$ \widehat{x}_2=N^{\dag}\widehat{b}_2+ \left(I-N^{\dag}N\right)y_2+ \varepsilon z_2$,
in which $y_2 \in\mathbb{R}^{(n-r)\times 1}$ and  $z_2 \in\mathbb{R}^{(n-r)\times 1}$ are arbitrary.
Therefore,
it follows from (\ref{App-G-1-E-1-Proof-0})
that we get (\ref{App-G-1-E-5}).
\end{proof}

We can derive the following Corollary \ref{App-G-1-C} from the Theorem \ref{App-G-1}.

\begin{corollary}[\cite{zhong--dual group inverse}]
\label{App-G-1-C}
Let $\widehat M\in\mathbb{D}^{n\times n}$, 
${ {\Dind}}\left(\widehat {M}\right)=1$ and
$\widehat b\in\mathbb{D}^{n\times 1}$.
Then (\ref{DL-eQ-G-1}) is consistent if and only if
$ \left(I_n-\widehat{M}^{\#}\widehat{M}\right)\widehat{b} =O$.

Furthermore,
the general solution to (\ref{DL-eQ-G-1}) is
$
 \widehat{x}
=\widehat{M}^{\#}\widehat{b}+
 \left(I_n-\widehat{M}^{\#}\widehat{M}\right)\widehat{y}$,
where $\widehat{y}  \in\mathbb{D}^{n\times 1}$ is  arbitrary.
\end{corollary}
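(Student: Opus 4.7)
The plan is to obtain Corollary~\ref{App-G-1-C} as a direct specialization of Theorem~\ref{App-G-1}, exploiting the fact that $\Dind(\widehat M)=1$ collapses the nilpotent block appearing in the decomposition (\ref{C-N-D-Ind=1}). As a preliminary step, observe that by the very definition of the dual index one has $\Aind(\widehat M)\le \Dind(\widehat M)$, so $\Dind(\widehat M)=1$ forces $\Aind(\widehat M)=1$ as well. Hence Theorem~\ref{App-G-1} is applicable, and Remark~\ref{Remark-GDDI-2} combined with Remark~\ref{Remark-4-2} yields $\widehat M^{\#}=\widehat M_{\rm W}^{\#}$, which allows us to freely replace the weak DGI in Theorem~\ref{App-G-1} by the DGI used in the corollary.

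Next I would unpack the meaning of $\Dind(\widehat M)=1$ in the decomposition (\ref{C-N-D-Ind=1}). Under $\Aind(\widehat M)=1$ the lower-right block is of the form $\varepsilon N$, and the nilpotency requirement $\widehat N^{t}=O$ with $t=1$ forces $\varepsilon N=O$, i.e.\ $N=0$. Using the identities in (\ref{App-G-1-E-1-Proof-3}), this makes the ``obstruction matrix'' $\widehat M-(\widehat M_{\rm W}^{\#})^{\#}$ equal to $\widehat P\,\mathrm{diag}(O,\varepsilon N)\,\widehat P^{-1}=O$. Consequently the range condition (\ref{App-G-1-E-2}) of Theorem~\ref{App-G-1} degenerates to $(I_n-\widehat M^{\#}\widehat M)\widehat b=O$, which in turn automatically implies (\ref{App-G-1-E-1}) and is equivalent to the conjunction of both. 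This delivers the stated consistency criterion.

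For the general solution, I would substitute $N=0$, $N^{\dag}=0$, $I_{n-r}-N^{\dag}N=I_{n-r}$, and $\widehat b_2=O$ (the latter forced by the consistency condition just derived) into formula (\ref{App-G-1-E-5}). The bottom block of $\widehat P^{-1}\widehat x$ then reduces to $y_2+\varepsilon z_2$ with $y_2,z_2\in\mathbb R^{(n-r)\times 1}$ arbitrary, i.e.\ to an arbitrary element of $\mathbb D^{(n-r)\times 1}$. Using (\ref{C-N-D-Ind=1-WDGI}) and $\widehat M^{\#}=\widehat M_{\rm W}^{\#}$, the top block $\widehat C^{-1}\widehat b_1$ is recognized as the nontrivial block of $\widehat M^{\#}\widehat b$, while $(I_n-\widehat M^{\#}\widehat M)\widehat y=\widehat P\,\mathrm{diag}(O,I_{n-r})\,\widehat P^{-1}\widehat y$ produces an arbitrary dual vector in the bottom block as $\widehat y\in\mathbb D^{n\times 1}$ varies, and annihilates the top. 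Assembling these two pieces reproduces the asserted form $\widehat x=\widehat M^{\#}\widehat b+(I_n-\widehat M^{\#}\widehat M)\widehat y$.

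The main obstacle is bookkeeping rather than substance: one must verify carefully that the nilpotent block really vanishes under $\Dind=1$, so that the range condition in Theorem~\ref{App-G-1} becomes trivial, and that the two parametrizations of the solution set—either by the free pair $(y_2,z_2)$ or by an arbitrary $\widehat y\in\mathbb D^{n\times 1}$ through the homogeneous correction $(I_n-\widehat M^{\#}\widehat M)\widehat y$—sweep out exactly the same affine subspace. No new analytic idea is required beyond the machinery already established in Theorem~\ref{App-G-1}.
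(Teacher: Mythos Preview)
Your proposal is correct and follows essentially the same approach as the paper's own proof: both arguments use the decomposition (\ref{C-N-D-Ind=1}), observe that $\Dind(\widehat M)=1$ forces $N=O$, and then read off the consistency condition and the general solution from the resulting block structure. The only cosmetic difference is that the paper re-invokes the block equation (\ref{App-G-1-E-1-Proof-1}) directly to get $\widehat b_2=O$, whereas you specialize the two conditions (\ref{App-G-1-E-1})--(\ref{App-G-1-E-2}) of Theorem~\ref{App-G-1}; these are equivalent routes through the same computation.
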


\begin{proof}
Let $\widehat M \in\mathbb{D}^{n\times n}$,
${ {\Dind}}\left(\widehat {M}\right)=1$,  $\Ark\left(\widehat {M}\right)=r$
and the decomposition of $\widehat {M}$ be as in (\ref{C-N-D-Ind=1}).
Since   ${ {\Dind}}\left(\widehat {M}\right)=1$,
then
$N=O$.
It follows from (\ref{App-G-1-E-1-Proof-1}) that
$\widehat M \widehat{x}=\widehat{b}$ is consistent if and only if
there exists
 $  \widehat{b}_2=O$.
 Therefore,
by applying (\ref{App-G-1-E-1-Proof-3}) we get that
$  \widehat{b}_2=O$
if and only if
$\left(I_n-\widehat{M}^{\#}\widehat{M}\right)\widehat{b} =O$.

Since $N=O$,
then by applying (\ref{App-G-1-E-5}) we get
$ \widehat{x}
=
 \widehat P
 \begin{bmatrix}
 \widehat{C}^{-1} \widehat{b}_1   \\
y_2+ \varepsilon{z}_2
\end{bmatrix}$,
where $y_2 \in\mathbb{R}^{(n-r)\times 1}$ and  $z_2 \in\mathbb{R}^{(n-r)\times 1}$ are arbitrary.
It follows from  (\ref{App-G-1-E-1-Proof-3})  that
$ \widehat{x}
=\widehat{M}^{\#}\widehat{b}+
 \left(I_n-\widehat{M}^{\#}\widehat{M}\right)\widehat{y}$,
where $\widehat{y}  \in\mathbb{D}^{n\times 1}$ is  arbitrary.
\end{proof}

 \

In \cite{Campbell2009book},
Campbell and  Meyer get that $x=M^\# b$
 is the unique solution of the restricted   matrix equation $Mx=b$ with
$ {x}\in \mathcal{R}( {M})$ and $\ind(M)=1$.
In \cite{zhong--dual group inverse},
by applying the DGI, Zhong and Zhang consider the restricted dual matrix equation  (\ref{DL-eQ-G-1})
with
$\widehat{x}\in \mathcal{R}(\widehat{M})$
and
 $\Dind\left(\widehat{M}\right)=1$.
In the following Theorem \ref{App-G-2},
by applying the WDGI we consider
the general restricted dual matrix equation (\ref{DL-eQ-G-1})
with
$\widehat{x}\in \mathcal{R}(\widehat{M})$
and
$\Aind\left(\widehat{M}\right)=1$.

\begin{theorem} \label{App-G-2}
Let $\widehat M \in\mathbb{D}^{n\times n}$,
${ {\Aind}}\left(\widehat {M}\right)=1$ and
$\widehat b \in\mathbb{D}^{n\times 1}$.
 Then the  solution to  (\ref{DL-eQ-G-1}) with
 $\widehat{x}\in \mathcal{R}(\widehat{A})$
 exists
 if and only if
 \begin{align}
\label{App-G-1-E-3}
\left(I_n-\widehat{M}_{\rm W}^{\#}\widehat{M}\right)\widehat{b}&=O.
\end{align}
Furthermore, when the restricted  solution to  (\ref{DL-eQ-G-1}) exists,
 \begin{align}
\label{App-G-1-E-4}
 \widehat{x}=\widehat{M}_{\rm W}^{\#}\widehat{b} +
 \left(\widehat{M}-\left(\widehat{M}_{\rm W}^{\#}\right)^{\#}\right)\widehat{y},
\end{align}
where  $\widehat{y}\in\mathbb{D}^{n\times 1}$ is arbitrary.
\end{theorem}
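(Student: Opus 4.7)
The plan is to follow the block-decomposition argument already used in the proof of Theorem \ref{App-G-1}, but now to incorporate the extra range constraint $\widehat x \in \mathcal{R}(\widehat M)$. I would lean on three ingredients: the decomposition (\ref{C-N-D-Ind=1}) of $\widehat M$, the explicit form (\ref{C-N-D-Ind=1-WDGI}) of $\widehat M_{\rm W}^{\#}$, and the block identities gathered in (\ref{App-G-1-E-1-Proof-3}), so that at the end everything is matched by comparing block coordinates.

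First I would partition $\widehat P^{-1}\widehat b = \begin{bmatrix} \widehat b_1 \\ \widehat b_2 \end{bmatrix}$ and $\widehat P^{-1}\widehat x = \begin{bmatrix} \widehat x_1 \\ \widehat x_2 \end{bmatrix}$, so that $\widehat M\widehat x=\widehat b$ splits into $\widehat C\widehat x_1=\widehat b_1$ and $\varepsilon N\widehat x_2=\widehat b_2$. Then I would translate the range constraint: since $\mathcal{R}(\widehat M)$ is the image of $\widehat P\begin{bmatrix}\widehat C & O \\ O & \varepsilon N\end{bmatrix}$, the requirement $\widehat x\in\mathcal{R}(\widehat M)$ becomes $\widehat x_1$ arbitrary (as $\widehat C$ is invertible) and $\widehat x_2=\varepsilon N\widehat z_2$ for some $\widehat z_2\in\mathbb{D}^{(n-r)\times 1}$. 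Substituting such an $\widehat x_2$ into the second block equation gives $\varepsilon^{2}N^{2}\widehat z_2=O=\widehat b_2$, so a restricted solution exists iff $\widehat b_2=O$, which via (\ref{App-G-1-E-1-Proof-3}) is exactly $(I_n-\widehat M_{\rm W}^{\#}\widehat M)\widehat b=O$, establishing (\ref{App-G-1-E-3}).

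Assuming solvability, the block-form general restricted solution is $\widehat x_1=\widehat C^{-1}\widehat b_1$ together with $\widehat x_2=\varepsilon N\widehat y_2$ for an arbitrary $\widehat y_2\in\mathbb{D}^{(n-r)\times 1}$. To obtain the closed form (\ref{App-G-1-E-4}), I would use (\ref{C-N-D-Ind=1-WDGI}) and (\ref{App-G-1-E-1-Proof-3}) to compute
\[
\widehat M_{\rm W}^{\#}\widehat b=\widehat P\begin{bmatrix}\widehat C^{-1}\widehat b_1\\ O\end{bmatrix},\qquad
\left(\widehat M-(\widehat M_{\rm W}^{\#})^{\#}\right)\widehat y=\widehat P\begin{bmatrix}O\\ \varepsilon N\widehat y_2\end{bmatrix},
\]
where $\widehat y=\widehat P\begin{bmatrix}\widehat y_1\\ \widehat y_2\end{bmatrix}$ is arbitrary, and then add the two expressions to recover the block solution $\widehat P\begin{bmatrix}\widehat C^{-1}\widehat b_1\\ \varepsilon N\widehat y_2\end{bmatrix}$; since $\widehat y_1$ plays no role, letting $\widehat y$ range over all of $\mathbb{D}^{n\times 1}$ produces every restricted solution.

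The step I expect to be the main obstacle is the faithful translation of the range constraint into block coordinates: one must recognize that the second block of $\mathcal{R}(\widehat M)$ is the $\varepsilon$-scaled image of $N$, not all of $\mathbb{D}^{(n-r)\times 1}$, and that this is precisely what upgrades the unrestricted existence conditions (\ref{App-G-1-E-1})--(\ref{App-G-1-E-2}) into the single cleaner condition (\ref{App-G-1-E-3}). Once that identification is in place, the equivalence and the parametrization both fall out of routine block-matrix manipulations using (\ref{C-N-D-Ind=1-WDGI}) and (\ref{App-G-1-E-1-Proof-3}).
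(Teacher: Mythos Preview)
Your proposal is correct and follows essentially the same block-decomposition route as the paper. The only cosmetic difference is that the paper encodes the range constraint by writing $\widehat x=\widehat M\widehat y$ and reducing to the equation $\widehat M^{2}\widehat y=\widehat b$ (whose second block collapses because $(\varepsilon N)^{2}=O$), whereas you characterize $\mathcal{R}(\widehat M)$ directly in block coordinates as $\widehat x_{2}=\varepsilon N\widehat z_{2}$; these two formulations are equivalent and yield the same existence condition $\widehat b_{2}=O$ and the same parametrized solution.
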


\begin{proof}
Let $\widehat M=M+\varepsilon M_0\in\mathbb{D}^{n\times n}$,
${ {\Aind}}\left(\widehat {M}\right)=1$, $\Ark\left(\widehat {M}\right)=r$
and the decomposition of $\widehat {M}$ be as in (\ref{C-N-D-Ind=1}).
Since
$\widehat{x}\in \mathcal{R}\left(\widehat{M}\right)$,
then we write $\widehat{x}= \widehat{M}\widehat{y}$.
Therefore,
the restricted dual matrix equation   (\ref{DL-eQ-G-1}) is consistent  with $\widehat{x}\in \mathcal{R}\left(\widehat{M}\right)$
if and only if
the dual matrix equation $ \widehat{M}^2\widehat{y}=\widehat{b} $ is consistent.
Let $\widehat P^{-1}\widehat{b}$
be partitioned as in  (\ref{App-G-1-E-1-Proof-4}),
and
 $\widehat P^{-1}\widehat{y}$ be partitioned as
\begin{align}
\label{App-G-1-E-1-Proof-5}
\widehat P^{-1}\widehat{y}= \begin{bmatrix}
 \widehat{y}_1    \\
 \widehat{y}_2
\end{bmatrix}.
\end{align}
It follows that
\begin{align}
\label{App-G-1-E-1-Proof-2}
 \widehat{M}^2\widehat{y}-\widehat{b}
 =
\widehat P
\left(\begin{bmatrix}
 \widehat{C}^2 &       O     \\
        O   & O
\end{bmatrix} \begin{bmatrix}
 \widehat{y}_1    \\
 \widehat{y}_2
\end{bmatrix}-\begin{bmatrix}
 \widehat{b}_1    \\
 \widehat{b}_2
\end{bmatrix}\right)
=
\widehat P
\left(  \begin{bmatrix}
 \widehat{C}^2\widehat{y}_1- \widehat{b}_1   \\
 - \widehat{b}_2
\end{bmatrix}\right).
 \end{align}

Since $\widehat{C}$ is invertible,
then $ \widehat{M}^2\widehat{y}=\widehat{b} $ is consistent
if and only if
$\widehat{b}_2=O$.
Therefore,
$\left(I_n-\widehat{M}_{\rm W}^{\#}\widehat{M}\right)\widehat{b}=O$,
that is,
(\ref{App-G-1-E-3}).

Furthermore, let the restricted solution to  (\ref{DL-eQ-G-1}) exist,
then
$$ \widehat{y}=
\widehat P\begin{bmatrix}
\widehat{C}^{-2} \widehat{b}_1   \\
 \widehat{y}_2
\end{bmatrix}.$$
It follows that
\begin{align}
\nonumber
 \widehat{x}
 &
 =
 \widehat{M}\widehat{y}=
 \widehat P
\begin{bmatrix}
 \widehat{C}  &       O     \\
        O   & \varepsilon N
\end{bmatrix}
\begin{bmatrix}
\widehat{C}^{-2} \widehat{b}_1   \\
 \widehat{y}_2
\end{bmatrix}
=
 \widehat P
\begin{bmatrix}
 \widehat{C}^{-1}  &       O     \\
        O   &  O
\end{bmatrix}
\begin{bmatrix}
  \widehat{b}_1   \\
 \widehat{b}_2
\end{bmatrix}
+
 \widehat P \begin{bmatrix}
O &O\\
O &\varepsilon N
\end{bmatrix}
\begin{bmatrix}
  \widehat{y}_1   \\
 \widehat{y}_2
\end{bmatrix}
\\
\nonumber
 &
 =
 \widehat{M}_{\rm W}^{\#}\widehat{b} +
 \left(\widehat{M}-\left(\widehat{M}_{\rm W}^{\#}\right)^{\#}\right)\widehat{y},
\end{align}
that is, (\ref{App-G-1-E-4}).
\end{proof}

\begin{example}
\label{Ex-4-2}
Let $\widehat M=M+\varepsilon M_0=\begin{bmatrix}
        1&0  \\
        0&0        \end{bmatrix}
+\varepsilon \begin{bmatrix}
        0&0  \\
        1&1        \end{bmatrix}$
        and
        $\widehat b=\begin{bmatrix}
        1   \\
        0        \end{bmatrix}
+\varepsilon \begin{bmatrix}
        0  \\
        1        \end{bmatrix}$.
It is not difficult to verify that
 \begin{align}
 \nonumber
 \widehat{M}_{\rm W}^{\#}=M^{\#}+\varepsilon R
=\begin{bmatrix}
        1&0  \\
        0&0        \end{bmatrix} +\varepsilon \begin{bmatrix}
        0&0  \\
        1&0        \end{bmatrix}
\end{align}
and
 \begin{align}
 \nonumber
\left(I_2-\widehat{M}_{\rm W}^{\#}\widehat{M}\right)\widehat{b}
=\left(\begin{bmatrix}
        0&0  \\
        0&1        \end{bmatrix} +\varepsilon \begin{bmatrix}
        0&0  \\
       - 1&0        \end{bmatrix} \right)\left(\begin{bmatrix}
        1   \\
        0        \end{bmatrix}
+\varepsilon \begin{bmatrix}
        0  \\
        1        \end{bmatrix}\right)
=O.
\end{align}
Then the  solution to  (\ref{DL-eQ-G-1}) with
 $\widehat{x}\in \mathcal{R}(\widehat{M})$
 exists
 and
  \begin{align}
 \nonumber
 \widehat{x}
 &
 =
 \widehat{M}_{\rm W}^{\#}\widehat{b} +
 \left(\widehat{M}-\left(\widehat{M}_{\rm W}^{\#}\right)^{\#}\right)\widehat{y}
 =
 \begin{bmatrix}
        1   \\
        0        \end{bmatrix}
+\varepsilon \begin{bmatrix}
        0  \\
        1        \end{bmatrix}
        +
\varepsilon \begin{bmatrix}
        0&0  \\
        1&1        \end{bmatrix}\widehat{y},
\end{align}
where  $\widehat{y}\in\mathbb{D}^{n\times 1}$ is arbitrary.

When  $\widehat{y}=O$,
then
$$\widehat{x}_1
 =
 \begin{bmatrix}
        1   \\
        0        \end{bmatrix}
+\varepsilon \begin{bmatrix}
        0  \\
        1        \end{bmatrix}
        =
        \left(\begin{bmatrix}
        1&0  \\
        0&0        \end{bmatrix}
+\varepsilon \begin{bmatrix}
        0&0  \\
        1&1        \end{bmatrix}\right)\left(\begin{bmatrix}
        1   \\
        0         \end{bmatrix}
+\varepsilon \begin{bmatrix}
        0   \\
        1        \end{bmatrix}\right)\in \mathcal{R}(\widehat{M}).$$

  When  $\widehat{y}=\begin{bmatrix}
        0   \\
        1        \end{bmatrix}$,
then
$$\widehat{x}_2
 =
 \begin{bmatrix}
        1   \\
        0        \end{bmatrix}
+\varepsilon \begin{bmatrix}
        0  \\
        2        \end{bmatrix}
        =
        \left(\begin{bmatrix}
        1&0  \\
        0&0        \end{bmatrix}
+\varepsilon \begin{bmatrix}
        0&0  \\
        1&1        \end{bmatrix}\right)\left(\begin{bmatrix}
        1   \\
        0         \end{bmatrix}
+\varepsilon \begin{bmatrix}
        0   \\
        2        \end{bmatrix}\right)\in \mathcal{R}(\widehat{M})$$

        It is obvious that $\widehat{x}_1 \neq\widehat{x}_2 $.
Therefore,
the restricted  solution to  (\ref{DL-eQ-G-1}) in Theorem \ref{App-G-2} is not unique.
\end{example}

It is obvious  that
$\widehat{M}_{\rm W}^{\#}=\widehat{M}^{\#}$
and
$\widehat{M}-\left(\widehat{M}_{\rm W}^{\#}\right)^{\#}=O$,
when  ${ {\Dind}}\left(\widehat {M}\right)=1$.
Therefore, by applying Theorem \ref{App-G-2},
we have the following Corollary\ref{App-G-2-C}.

\begin{corollary} \label{App-G-2-C}
Let $\widehat M \in\mathbb{D}^{n\times n}$,
${ {\Dind}}\left(\widehat {M}\right)=1$,
$\widehat b \in\mathbb{D}^{n\times 1}$.
 Then the  solution to  (\ref{DL-eQ-G-1}) with
 $\widehat{x}\in \mathcal{R}(\widehat{M})$
 exists
 if and only if
$\left(I_n-\widehat{M}^{\#}\widehat{M}\right)\widehat{b}=O$.

Furthermore, when the restricted  solution to  (\ref{DL-eQ-G-1}) exists,
$ \widehat{x}=\widehat{M}^{\#}\widehat{b}$.
\end{corollary}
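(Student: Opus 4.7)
The plan is to prove this corollary as a direct specialization of Theorem \ref{App-G-2}: I will first check that the stronger hypothesis $\Dind(\widehat{M})=1$ implies the weaker $\Aind(\widehat{M})=1$ required there, and then verify that the two dual matrices $\widehat{M}_{\rm W}^{\#}$ and $\widehat{M}-(\widehat{M}_{\rm W}^{\#})^{\#}$ appearing in (\ref{App-G-1-E-3}) and (\ref{App-G-1-E-4}) collapse respectively to $\widehat{M}^{\#}$ and $O$. Once those two identities are in hand, the stated consistency condition and the formula $\widehat{x}=\widehat{M}^{\#}\widehat{b}$ drop out by simple substitution.

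The index comparison is immediate: by the very definition of $\Dind$ recalled in Section \ref{Section-2-Preliminaries}, one has $\Aind(\widehat{M})\leq\Dind(\widehat{M})=1$, so $\Aind(\widehat{M})=1$ and Theorem \ref{App-G-2} applies verbatim. The two simplifying identities are then obtained as follows. Since $\Dind(\widehat{M})=\Aind(\widehat{M})=1$, Remark \ref{Remark-GDDI-2} (with $k=1$, reading the DDI as the DGI) guarantees that the DGI of $\widehat{M}$ exists, and then Remark \ref{Remark-4-2} delivers $\widehat{M}_{\rm W}^{\#}=\widehat{M}^{\#}$. Moreover, the defining equations (\ref{DGI-Def}) are symmetric in $\widehat{M}$ and its DGI, so that $\widehat{M}$ itself satisfies the three DGI axioms for $\widehat{M}^{\#}$; by the uniqueness of the DGI this yields $(\widehat{M}^{\#})^{\#}=\widehat{M}$, whence $\widehat{M}-(\widehat{M}_{\rm W}^{\#})^{\#}=O$.

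Substituting these two identities into (\ref{App-G-1-E-3}) rewrites the consistency condition as $(I_n-\widehat{M}^{\#}\widehat{M})\widehat{b}=O$, which is exactly what is claimed. Substituting them into the general solution (\ref{App-G-1-E-4}) annihilates the term $(\widehat{M}-(\widehat{M}_{\rm W}^{\#})^{\#})\widehat{y}$ for every choice of $\widehat{y}\in\mathbb{D}^{n\times 1}$, leaving $\widehat{x}=\widehat{M}^{\#}\widehat{b}$. No genuine obstacle is anticipated, as Theorem \ref{App-G-2} has already done the heavy lifting; the only noteworthy phenomenon is that the non-uniqueness observed in Example \ref{Ex-4-2} disappears in this corollary, precisely because the block $\varepsilon N$ in the decomposition (\ref{C-N-D-Ind=1}) vanishes when $\Dind(\widehat{M})=1$.
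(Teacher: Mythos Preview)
Your proposal is correct and follows essentially the same approach as the paper: the paper simply remarks, immediately before stating the corollary, that when $\Dind(\widehat{M})=1$ one has $\widehat{M}_{\rm W}^{\#}=\widehat{M}^{\#}$ and $\widehat{M}-(\widehat{M}_{\rm W}^{\#})^{\#}=O$, and then invokes Theorem \ref{App-G-2}. Your write-up fills in the justifications (the index inequality, Remarks \ref{Remark-GDDI-2} and \ref{Remark-4-2}, and the symmetry of the DGI axioms giving $(\widehat{M}^{\#})^{\#}=\widehat{M}$), but the logical route is identical.
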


%

\section{Conclusion}\label{sec13}

In this paper,
we introduce a generalized DDI,
 call it WDDI,
 and get some properties and characterizations of the WDDI.
It should be noted that
any square dual matrix has the WDDI which is unique.
Furthermore,
we introduce the WDGI, which is a generalized DGI,
and
apply it to a constrained dual matrix problem.
 These results provide a strong theoretical basis for a wide range of applications of dual generalized inverse.

In \cite{Wang and Gao},
Wang and Gao get
$\widehat{M}^{\tiny\textcircled{\#}}
=\widehat{M}^{ \#}\widehat{M}\widehat{M}^{\dag}$,
when the DCI of $\widehat{M}$ exists.
Naturally, we denote $\widehat{M}_{\rm W}^{\tiny\textcircled{\#}}
=\widehat{M}_{\rm W}^{ \#}\widehat{M}\widehat{M}_{\rm W}^{\dag}$,
and
call it weak dual core inverse (WDCI),
when  the appreciable index of  $\widehat{M}\in\mathbb{D}^{n\times n} $ is 1.
And we obtain the characterizations of WDGI and WDGI
 by applying explicit dual matrix equations
in (\ref{g-DMPGI-Def}) and  (\ref{GDGI-def-Eq}), respectively.
Therefore, there seems to be an open problem that is both difficult and interesting:
How to provide the WDCI a similar characterization?

\bigskip

{\bf Funding}\quad
This work was supported partially by
the Guangxi Science and Technology Program (No. GUIKE AA24010005),
the Special Fund for Science and Technological Bases and Talents of Guangxi (No. GUIKE AD19245148)
and
the Research Fund Project of Guangxi Minzu University (No. 2019KJQD03).

{\bf Data availability}\quad
 All data generated or analysed during this study are included in this published article.

{\bf Conflict of interest}\quad
 The authors have no conflict of interest to declare that are relevant to the content of this
article.

\end{document}